\newtheorem{theorem}{Theorem}[section]
\newtheorem{proposition}[theorem]{Proposition}
\newtheorem{corollary}[theorem]{Corollary}
\newtheorem{definition}{Definition}[section]
\begin{document}

\title{Integrability via reversibility}
\author{Maciej P. Wojtkowski}
\address{Department of Mathematics Physics and Computer Science
\newline University of Opole
\newline 45-052 Opole, POLAND}
\thanks{The author was partially supported 
by the NCN grant 2011/03/B/ST1/04427} 
\email{mpwojtkowski@math.uni.opole.pl}
\date{\today}

\begin{abstract}
A class of left-invariant second order reversible systems
with functional parameter is introduced which exhibits 
the phenomenon of robust integrability:  
an open and dense subset of the phase space is filled with 
invariant tori carrying quasi-periodic motions, and this 
behavior persists under perturbations within the class.

Real-analytic volume preserving systems are found in this class which have 
positive Lyapunov exponents on an open subset, and the complement
filled with invariant tori.   
\end{abstract}

\maketitle

\section{Introduction}

We study a family of second order dynamical systems on a locally homogeneous 
Riemannian space $M$, modeled on a special solvable Lie group. 
The simplest example is the geodesic flow of a left-invariant
metric. Our class generalizes the examples discovered
by  Butler \cite{Bu1}, and Bolsinov and Taimanov \cite{B-T}. 
In these examples the complete integrability of the geodesic
flow in the tangent bundle $TM$ is accompanied by 
highly non-integrable behavior on an invariant 
submanifold of codimension $n = dim \ M$.
The dynamics there is the suspension of a toral automorphism,
and in \cite{B-T} the hyperbolic automorphism is chosen,
which leads to an Anosov flow. The presence of an Anosov flow
as a subsystem guarantees the positivity of topological entropy.

We show that such a behavior extends to a larger class 
of left-invariant second order systems. This class
is parametrized by a matrix $L$ and a functional parameter $F$: 
a smooth vector field in the unit ball of the Euclidean space.  
We call them {\it $L-F$ systems}. 
The crucial property of $L-F$ systems is their $J$-reversibility,
where $J$ is an appropriate involution of the tangent bundle
$TM$. Let us recall that a system is {\it $J$-reversible}
if the involution $J$ conjugates the forward in time 
dynamics with the backward in time dynamics.

There is a vast literature devoted to $J$-reversible 
systems. The survey paper of Lamb and Roberts \cite{L-R}
contains an extensive bibliography.

In particular there is a version of the KAM theory for the  
$J$-reversible systems. It goes back to Moser \cite{M},
and Sevryuk \cite{S}. In our case we establish
robust integrability: it persists under any small perturbation 
as long as we stay in the family of $L-F$ systems. 
Note that this family is parametrized by an infinite dimensional 
Banach space of vector fields $F$. 
What is notable is that we do not assume volume preservation,
the symmetries imposed on the system force the integrability,
and the occurrence of a finite absolutely continuous invariant
measure. This measure has a density
with respect to the Liouville volume which is only $C^\infty$, 
and typically no real-analytic invariant density exists
(Section 8).

The $J$-reversible KAM theory would give us large subsets of quasi-periodic 
motions for perturbations which are not left-invariant, as long
as they are $J$-reversible. We were unable to check the non-degeneracy
of the unperturbed system required for the application of the KAM
theory. However we conjecture that the non-degeneracy does hold 
for most systems under consideration.

Butler, \cite{Bu2},\cite{Bu3}, used the mechanism  discovered in \cite{B-T} 
to obtain $C^\infty$ examples of integrable volume preserving
systems with positive metric entropy. In our class
we find whole families of real-analytic systems 
with positive metric entropy and a subset 
filled with quasi-periodic motions, open but not dense
(Section 9).

The phenomenon of robust integrability, accompanied by  
positive topological entropy occurs already 
for geodesic flows of linear connections. The generalization
of the geodesic flow of the Levi-Civita connection to more   
general linear connections was discussed in \cite{P-W}.
Such a generalization appears naturally in the study of 
Gaussian thermostats, a class of systems introduced 
by Hoover \cite{H}. The paper of Gallavotti and Ruelle 
\cite{G-R} introduces the Gaussian thermostats in the physical context.
In particular in our class of systems
we find a Gaussian thermostat with the following 
paradoxical behavior (Section 10). For small 
kinetic energy the system is asymptotic to an Anosov flow,
which has a large codimension in the whole phase space. 
For larger values of the kinetic energy the system 
undergoes a drastic change, it becomes integrable:
an open and dense subset in the phase space is filled with 
quasi-periodic motions. The Anosov subsystem is still 
present, but for a subset of initial conditions of full Lebesgue 
measure the solutions stay away from that chaotic subsystem, 
and fill densely invariant tori. 

The author would like to thank  Gabriela 
Tereszkiewicz, Wojciech Czernous and Adam Doliwa
for very useful discussions. Preliminary results of this work
were presented at the seminar of Lai-Sang Young at the Courant 
Institute in November 2014. We are grateful for her insights and
hospitality.

\section{The configuration space}
Our configuration space is a locally homogeneous Riemannian space 
modeled on a special Lie group $G$. We start its description
with the Lie algebra $\mathfrak g$. We assume that $dim \ \mathfrak g = n+1$
and that $\mathfrak g$ contains an $n$ dimensional abelian ideal 
$\mathfrak g_0$. 
We choose an arbitrary scalar product in $\mathfrak g$, and let $b$ denote
a unit vector orthogonal to $\mathfrak g_0$. Since the ideal $\mathfrak g_0$
is assumed to be abelian the Jacobi identity imposes no conditions on the operator 
$L: \mathfrak g_0 \to  \mathfrak g_0$,   
$L = ad_b$.  At this stage we place no restrictions on
the operator $L$. Later on we will consider various special cases.
We endow the Lie group with the left invariant metric determined by our
choice of the scalar product in $\mathfrak g$.

The Levi-Civita connection $\nabla$ on the Riemannian manifold $G$
can be expressed as a tensor on $\mathfrak g$. It can be calculated 
directly, which is done in the fundamental paper of
Milnor \cite{Mi}, where extensive explanations can be found.  
The formulas read 
\begin{equation}
\begin{aligned}
&\nabla_bb=0, \nabla_b\xi = A\xi, 
\ \ \text{for} \ \ \xi \in \mathfrak g_0, \\
&\nabla_\xi b = -S\xi, , \nabla_\xi\zeta = 
\langle S\xi,\zeta\rangle b, \ \ \text{for} \ \  \xi,\zeta \in \mathfrak g_0
\end{aligned}
\end{equation}
where $S= \frac{1}{2}(L+L^*)$ and $A = \frac{1}{2}(L-L^*)$ 
denote the symmetric and skew-symmetric parts of the operator $L$.

The Lie algebra has an important automorphism 
$K: \mathfrak g \to \mathfrak g$, where $-K$ is equal to the 
euclidean reflection in $\mathfrak g_0$. There are only few Lie algebras
with this kind of additional symmetry. It is not difficult to enumerate
all of them. Our class of Lie algebras is singled out by the 
additional property that the invariant subspace of $K$ is a subalgebra. 


Since the automorphism $K$ is orthogonal it generates the automorphism
$\mathcal K$ of the Lie group $G$ which is an isometry. 
This isometry will play crucial role in our discussion.
Let us note that both $K$ and $\mathcal K$ are involutive,
i.e., $K=K^{-1}, \mathcal K = \mathcal K^{-1}$.

The Lie group has the following matrix representation, which we will also 
denote by $G$. It consists of matrices with the block form 
\[
\left[ \begin{array}{rr} 1 & 0 \\ w &  e^{uL} \end{array} \right],
\ \ \ w \ \in \ \mathbb R^n, u \ \in \ \mathbb R.
\]
The abelian ideal  $\mathfrak g_0$ corresponds to 
the normal abelian subgroup $G_0$ consisting of the 
matrices with $u =0$. We obtain convenient coordinates 
$(w,u)\in \mathbb R^{n} \times \mathbb R$ in the group $G$, which is the semi-direct 
product $G_0 \rtimes G_1$, of the abelian 
additive groups  $G_0 =\mathbb R^n$ and $G_1 = \mathbb R$.

The Lie algebra of our matrix group consists of matrices
of the form 
\[
\left[ \begin{array}{rr} 0 & 0 \\ \xi &  \eta L \end{array} \right],
\ \ \ \xi \ \in \ \mathbb R^n, \eta \ \in \ \mathbb R.
\]
We will be using the  linear coordinates 
$(\xi, \eta)\in \mathbb R^{n} \times \mathbb R$ 
in the Lie algebra $\mathfrak g$.

Let us choose a lattice
$\Gamma_0$ in $G_0$ of rank $n$, so that $\Gamma_0\backslash G_0$ 
is an $n$-dimensional torus. Our basic configuration space $M$ 
is the Riemannian manifold  $M= \Gamma_0\backslash G$. 
The manifold $M = \Gamma_0\backslash \left(G_0 \rtimes G_1\right)$ 
is canonically diffeomorphic to
$\left(\Gamma_0\backslash  G_0\right) \times G_1 =
\mathbb T^n\times \mathbb R$. In particular each torus 
in this product has a canonical 
affine structure inherited from the abelian subgroup $G_0$. 
This foliation into tori will play an important role.
We will refer to the leaves of this foliation as {\it toral leaves}. 

In general the Riemannian manifold $M$ is only locally homogeneous,
since the left translations do not factor to the coset space 
$\Gamma_0\backslash G$. However the left translations by elements 
from the subgroup $G_0$ do factor onto $M$, and these isometries 
preserve all the toral leaves, acting on them as translations.

The configuration space $M = \Gamma_0\backslash G$ is non-compact.
In some cases the discrete subgroup $\Gamma _0$ can be enlarged 
to a discrete subgroup $\Gamma$ so that the resulting quotient 
space $N = \Gamma\backslash M$ is compact.
It is so when the one parameter subgroup $e^{uL}, u \in \mathbb R,$
contains an automorphism of $\Gamma_0 \subset \mathbb R^n$.
In general the necessary condition for the existence of a 
quotient with finite volume is the unimodularity of the group $G$,
\cite{Mi}. In our case the group $G$ is unimodular if and only if
$tr \ L =0$.

Let us assume for simplicity that $A=e^L$ is an automorphism of 
the discrete subgroup $\Gamma_0$ (and the torus $\Gamma_0\backslash G_0$).   
The discrete subgroup $\Gamma$ is generated by $\Gamma_0$ and
$A \in G$ ($A =e^L$ can be considered as an element of the subgroup $G_1$:
$w = 0, u =1$.) We will consider such subgroups and the resulting
compact locally homogeneous spaces $N = \Gamma\backslash G$.

\section{Second order left invariant equations on Lie groups}
We will need some general properties of second order equations
on Lie groups, which are invariant under left translations.

A {\it second order equation on a manifold $M$} is a dynamical 
system, i.e., a continuous flow $\Psi^t, t \in \mathbb R$, on the tangent bundle $TM$, such
that for a trajectory $(x(t),v(t)) = \Psi^t(x(0),v(0)), t\in \mathbb R,$ 
where $x(t)\in M$ and 
$v(t)\in T_{x(t)}M$, we have $\frac{dx}{dt} = v(t)$.

Let further $G$ be a Lie group. The tangent bundle $TG$ of the 
group is canonically diffeomorphic to the cartesian product
$G\times \mathfrak g$ by the use of left translations.
With this identification  the derivative of a left translation on $G$,
as a mapping of $TG = G\times \mathfrak g $, is equal to the 
identity in the second factor. 

A second order equation on 
a Lie group is {\it left-invariant} if left translations on the tangent 
bundle $TG = G\times \mathfrak g$ commute with the flow. In other words
the trajectories of the flow are taken to trajectories by left translations:
for any $g\in G$ and any trajectory 
$(x(t),v(t)) = \Psi^t(x(0),v(0)), t\in \mathbb R,$ 
the parametrized curve $(gx(t),v(t))$ in $G\times \mathfrak g$
is also a trajectory of the flow.

The structure of a left-invariant second order equation is described
in the following  
\begin{proposition}
If a continuous flow $\Psi^t$ on the tangent bundle 
$TG = G\times \mathfrak g$ of a Lie group
is a left-invariant second order equation then it 
is a group extension of a flow on $\mathfrak g$, i.e., there is 
a continuous flow $\psi^t:\mathfrak g \to \mathfrak g$ and a 
continuous cocycle
over $\psi^t$ with values in $G$, $h: \mathbb R \times \mathfrak g \to G$,
such that 
\[
\Psi^t(g,v) = (gh(t,v), \psi^t(v)), \ \ 
\text{for} \ \ (g,v)\in G \times \mathfrak g. 
\]
\end{proposition}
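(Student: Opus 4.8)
The plan is to exploit the two structural hypotheses directly: left-invariance pins down the dependence of $\Psi^t$ on the group coordinate, while the flow (semigroup) property then forces both the flow structure on $\mathfrak g$ and the cocycle identity. For the statement as phrased, no computation with the second order condition itself is needed; only left-invariance and the flow axioms enter.

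First I would write the flow in components as $\Psi^t(g,v) = (X^t(g,v), V^t(g,v))$ with $X^t \in G$ and $V^t \in \mathfrak g$. By the description of left translation in the trivialization $TG = G \times \mathfrak g$ recalled in the text --- it is the identity in the $\mathfrak g$ factor --- left-invariance reads $\Psi^t(g x, v) = (g\, X^t(x,v),\, V^t(x,v))$. Specializing to $x = e$, the group identity, and defining $\psi^t(v) := V^t(e,v)$ and $h(t,v) := X^t(e,v)$, I obtain at once $\Psi^t(g,v) = (g\,h(t,v),\, \psi^t(v))$. In particular $V^t(g,v)$ does not depend on $g$, so the evolution of the $\mathfrak g$-coordinate is autonomous, which is what allows $\psi^t$ to be a genuine flow on $\mathfrak g$. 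Continuity of $\psi^t$ and of $h$ is inherited from continuity of $\Psi^t$.

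Next I would feed the product form into the semigroup law $\Psi^{t+s} = \Psi^t \circ \Psi^s$. Writing $\Psi^s(g,v) = (g\,h(s,v),\psi^s(v))$ and applying the formula again gives $\Psi^t(\Psi^s(g,v)) = (g\,h(s,v)\,h(t,\psi^s(v)),\, \psi^t(\psi^s(v)))$. Comparing with $\Psi^{t+s}(g,v) = (g\,h(t+s,v),\psi^{t+s}(v))$ and reading off the two factors yields the flow property $\psi^{t+s} = \psi^t \circ \psi^s$ on the second coordinate and the cocycle relation $h(t+s,v) = h(s,v)\,h(t,\psi^s(v))$ on the first. The normalizations $\psi^0 = \mathrm{id}$ and $h(0,v) = e$ follow from $\Psi^0 = \mathrm{id}$. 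This is precisely the assertion that $h$ is a $G$-valued cocycle over $\psi^t$ and that $\Psi^t$ is the corresponding group extension.

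The argument is essentially a formal verification, so the only real care needed is bookkeeping: correctly invoking that left translation acts trivially on the second factor, and tracking the order of multiplication in $G$ so that the cocycle identity comes out with its factors in the right order, since the non-commutativity of $G$ makes the distinction between $h(s,v)\,h(t,\psi^s(v))$ and the reversed product meaningful. The second order condition $\dot x = v$ plays no role in the structural statement itself; its effect would be to determine $h$ from $\psi^t$ as the solution of $\dot h(t,v) = h(t,v)\,\psi^t(v)$ in $TG = G\times\mathfrak g$, but that refinement is not required here.
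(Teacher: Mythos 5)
Your proof is correct and follows essentially the same route as the paper: both arguments use left-invariance to show that the $\mathfrak g$-component of $\Psi^t$ is independent of the group coordinate and that the $G$-component transforms by left translation (the paper phrases this as well-definedness of $\pi_{\mathfrak g}(\Psi^t(g,v))$ and $g^{-1}\pi_G(\Psi^t(g,v))$ for arbitrary $g$, while you specialize at $g=e$, which is equivalent). Your explicit verification of the cocycle identity $h(t+s,v)=h(s,v)\,h(t,\psi^s(v))$ from the semigroup law is exactly the check the paper declares ``can be easily checked'' and leaves to the reader.
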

\begin{proof}
Let $\pi_{G} : TG \to G$ and 
$\pi_{\mathfrak g} : TG \to \mathfrak g$ be the projections
associated with the identification of $TG$ with $G\times \mathfrak g$.
We define the flow $\psi^t$ in the Lie algebra by
\[
\psi^t(v) = \pi_{\mathfrak g}(\Psi^t(g,v)) \ \ \text{for any} \ \ g \in G.
\]
The mapping $\psi^t:\mathfrak g \to \mathfrak g$ 
is well defined because of left invariance, and hence it must be
a flow. 

Further let 
\[
h(t,v) = g^{-1}\pi_G\left(\Psi^t(g,v)\right)
\ \ \text{for any} \ \ g \in G.
\]
Again since it is well defined the cocyle property can be easily
checked.
\end{proof}
The flow on $\mathfrak g$ described in Proposition 3.1 will 
be referred to as the {\it Euler flow}. This choice of terminology
comes from the Euler equation of the rigid body dynamics, see \cite{A},
Appendix 2.

It follows from this Proposition 3.1 that if the Euler flow has a periodic 
trajectory through $v_0\in \mathfrak g$ of period $T$ then
the map $\Psi^T$ preserves the set $G\times \{v_0\}$ and 
it is equal there to a right translation on $G$.

\section{The dynamical systems: the $L-F$ systems}
The simplest dynamical system that is of interest to us 
is the geodesic flow, which we consider in the tangent bundle $TM$,
rather than the cotangent bundle. Using the Riemannian 
metric we can identify the tangent and cotangent bundles, and
the tangent bundle acquires the natural symplectic form $\omega$.

The isometry $\mathcal K: G \to G$ projects to   
$M = \Gamma_0\backslash G$,  and we denote it again as $\mathcal K$.
This isometry is involutive, i.e., $\mathcal K = \mathcal K^{-1}$.
 
The derivative $D\mathcal K: TM \to TM$ is also involutive and it 
commutes with the geodesic flow 
$\Psi^t: TM \to TM, \ \ t \ \in \mathbb R$,
\[
D\mathcal K \circ \Psi^t = \Psi^{t} \circ D\mathcal K ,    
\ \ \ t \ \in \mathbb R.
\]

Let $\tilde J :TM \to TM$ be the involution which is
identity in the base $M$ (it does not move points in $M$) and it
is equal to minus identity in every tangent space. It is well known that
the geodesic flow is {\it $\tilde J$-reversible}, i.e., 
\[
\tilde J \circ \Psi^t = \Psi^{-t} \circ \tilde J ,    \ \ \ t \ \in \mathbb R.
\]   
The involutions  $\tilde J$ and $D\mathcal K$ commute, hence their
composition is also an involution, and we denote it 
by $J = \tilde J \circ D\mathcal K =
D\mathcal K \circ \tilde J$.  
Since the geodesic flow commutes with $D\mathcal K$ 
it must be also $J$-reversible.

The $J$-reversibility is the fundamental self symmetry
that  is present in our family of dynamical systems 
on $SM$.

The first generalization is from the Levi-Civita connection 
of a left invariant metric on $G$ 
to a left invariant linear connection $\widehat\nabla$ on $G$. 
Such a connection differs from the Levi-Civita connection by
a tensor $B$ in $\mathfrak g$
\[
\widehat \nabla_XY - \nabla_XY  = B(X,Y), \ X,Y \in \mathfrak g.
\]
We assume that the connection  $\widehat\nabla$ has two additional 
properties. Firstly we require that
the parametrization of its geodesics is proportional to the arc length.
We will call such connections {\it para-metric}, since they generalize 
the concept of {\it metric connections}, which have isometric parallel 
transport.  A connection is para-metric if and only if 
$\langle B(X,X),X\rangle = 0$ for every $X\in \mathfrak g$.
The geodesic flow of a para-metric connection $\widehat \nabla$ preserves the unit sphere bundle,
and it will be denoted again by 
$\Psi^t: SM \to SM$.
A discussion of geodesic flows of linear connections
can be found in \cite{P-W}.

Secondly we assume that the symmetric part of the tensor 
$B = \widehat \nabla - \nabla$ is invariant under the 
isometric involution
$K: \mathfrak g \to \mathfrak g$, i.e., for every  $X\in \mathfrak g$
\[
B(KX,KX) = KB(X,X).
\]
If such a property holds we say that the connection is
{\it weakly $\mathcal K$-invariant}. It follows that 
for a left-invariant and weakly $\mathcal K$-invariant connection 
the geodesic flow $\Psi^t$ is again $J$-reversible.

The equations of the geodesic flow can be written as
\[
\frac{dx}{dt} = v,  \ \widehat\nabla_vv = 0, 
\]
where $x(t)\in G$ is a parametrized geodesic.

With the identification of the tangent bundle 
$TG$ with $G\times \mathfrak g$ by left translations
we get there the coordinates 
$(w,u;\xi, \eta)\in \mathbb R^{2n+2}$, 
which were introduced in Section 1. 
In these coordinates the above involutions are given by
\[
\begin{aligned}
&D\mathcal K(w,u;\xi, \eta) =(-w,u;-\xi, \eta), 
\tilde J (w,u;\xi, \eta) =(w,u;-\xi, -\eta),\\
&J(w,u;\xi, \eta) =(-w,u;\xi, -\eta)
\end{aligned}
\]
We need to establish the form of a left-invariant, para-metric,
weakly $\mathcal K$-invariant connection on $G$. Recall that $b\in \mathfrak g$ 
is a unit vector orthogonal to $\mathfrak g_0$. An arbitrary element
$X \in \mathfrak g$ can be written as 
$X = \xi + \eta b, \xi \in \mathfrak g_0$. 
\begin{proposition}
A connection $\widehat \nabla = \nabla + B$ on the group $G$ 
is left-invariant, para-metric and weakly $\mathcal K$-invariant 
if and only if there is a linear operator $C: \mathfrak g_0 \to
\mathfrak g_0$ such that for any $X = \xi + \eta b, \xi \in \mathfrak g_0$. 
\[
B(X,X) = \eta C\xi - \langle C\xi,\xi \rangle b
\]
\end{proposition}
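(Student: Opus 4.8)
The plan is to reduce everything to the $\mathfrak g$-valued quadratic map $X \mapsto B(X,X)$, since both defining conditions — para-metricity $\langle B(X,X),X\rangle = 0$ and weak $\mathcal K$-invariance $B(KX,KX) = KB(X,X)$ — involve only this diagonal part, while left-invariance is already encoded in the hypothesis that $B$ is a constant tensor on $\mathfrak g$. Writing a general $X = \xi + \eta b$ with $\xi \in \mathfrak g_0$, I would expand
\[
B(X,X) = Q_0(\xi) + \eta\, Q_1(\xi) + \eta^2\, Q_2,
\]
where $Q_0$ is $\mathfrak g$-valued and quadratic in $\xi$, $Q_1$ is $\mathfrak g$-valued and linear in $\xi$, and $Q_2 = B(b,b)$ is a fixed vector. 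The structural fact I would record first is that $K$ acts as $+1$ on $b$ and as $-1$ on $\mathfrak g_0$ (because $-K$ is the reflection fixing $\mathfrak g_0$), so its eigenspaces are exactly $\ker(K-I)=\mathbb R b$ and $\ker(K+I)=\mathfrak g_0$.

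Next I would feed this expansion into weak $\mathcal K$-invariance. Since $KX = -\xi + \eta b$, replacing $\xi$ by $-\xi$ sends $Q_1(\xi)\mapsto -Q_1(\xi)$ while fixing $Q_0$ and $Q_2$; matching the coefficients of $1,\eta,\eta^2$ in the identity $B(KX,KX)=KB(X,X)$ then forces $Q_0(\xi)$ and $Q_2$ into the $+1$-eigenspace $\mathbb R b$ and $Q_1(\xi)$ into the $-1$-eigenspace $\mathfrak g_0$. Hence there exist a scalar quadratic form $q$ on $\mathfrak g_0$, a linear operator $C:\mathfrak g_0\to\mathfrak g_0$, and a scalar $\gamma$ such that $B(X,X) = q(\xi)\,b + \eta\, C\xi + \eta^2\gamma\, b$.

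Finally I would impose para-metricity. Using $b\perp\mathfrak g_0$, the pairing $\langle B(X,X),X\rangle$ collapses to $\eta\bigl(q(\xi)+\langle C\xi,\xi\rangle\bigr) + \eta^3\gamma$, which vanishes identically in $(\xi,\eta)$ precisely when $\gamma=0$ and $q(\xi) = -\langle C\xi,\xi\rangle$. Substituting back yields exactly $B(X,X) = \eta\, C\xi - \langle C\xi,\xi\rangle\, b$, and the converse direction is an immediate verification by running the same expansion forward. I do not expect a serious obstacle: the computation is routine, and the only points demanding care are fixing the sign convention for $K$ correctly (an error there would interchange the roles of the $\eta^0,\eta^2$ terms and the $\eta^1$ term) and observing that although $\langle C\xi,\xi\rangle$ detects only the symmetric part of $C$, the operator $C$ itself remains free, so the $\mathfrak g_0$-component $\eta C\xi$ and the $b$-component $-\langle C\xi,\xi\rangle\, b$ are jointly and consistently parametrized by the single operator $C$.
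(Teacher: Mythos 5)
Your proof is correct and takes essentially the same route as the paper: both arguments expand $B(X,X)$ in powers of $\eta$ relative to the splitting $X = \xi + \eta b$ and exploit the $\pm 1$ eigenspaces of $K$ (namely $\mathbb R b$ and $\mathfrak g_0$). The only difference is organizational—you impose weak $\mathcal K$-invariance first, forcing $Q_0,Q_2$ into $\mathbb R b$ and $Q_1$ into $\mathfrak g_0$, and then let para-metricity kill $\gamma$ and identify $q(\xi)=-\langle C\xi,\xi\rangle$, whereas the paper first expands the para-metric condition as a cubic in $\eta$ and then applies $K$-invariance to $B(b,b)$ and $B(\xi,\xi)$; the content is identical.
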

\begin{proof}
Without loss of generality we can assume that $B$ is symmetric.
We have for every $X = \xi + \eta b, \xi \in \mathfrak g_0$,
\[
\begin{aligned}
&0 \equiv\langle B(X,X),X \rangle = 
\langle B(\xi,\xi),\xi \rangle +\eta \langle B(\xi,\xi),b \rangle \\
&+ 2\eta\langle B(\xi,b),\xi \rangle + 2\eta^2\langle B(\xi,b),b \rangle  
+\eta^2\langle B(b,b),\xi \rangle +\eta^3\langle B(b,b),b \rangle.    
\end{aligned}
\]
Since this cubic polynomial in $\eta$ must vanish we get for every 
$\xi \in \mathfrak g_0$  
\[
\begin{aligned}
&\langle B(\xi,\xi),b \rangle 
+ 2\langle B(\xi,b),\xi \rangle =0,
2\langle B(\xi,b),b \rangle  
+\langle B(b,b),\xi \rangle  =0,\\
&\langle B(\xi,\xi),\xi \rangle =0, 
\langle B(b,b),b \rangle =0.     
\end{aligned}
\]
Since the connection is weakly $\mathcal K$-invariant
we must have $KB(b,b) = B(b,b)$ and $KB(\xi,\xi) = B(\xi,\xi)$,
and so we obtain further that 

$B(b,b) = 0$ and $B(\xi,\xi) \perp \mathfrak g_0$.
It follows readily that 
\[
\begin{aligned}
& \langle B(\xi,b),b \rangle  = 0, \\
&B(\xi,\xi) = -2\langle B(\xi,b),\xi \rangle b.
\end{aligned}
\]
Putting $C\xi = 2B(\xi,b)$ we get the desired formula.
\end{proof}
Using Proposition 4.1 and the formulas (1) we obtain by direct calculations 
the equations of the geodesic flow for our special 
connections.
\begin{theorem}
For any para-metric left-invariant and weakly $\mathcal K$-invariant
connection, the geodesic equations in the coordinates 
$(w,u;\xi, \eta)$ in the 
tangent bundle $TG$ are 
\begin{equation}
\begin{aligned}
\frac{d\xi}{dt} = \eta F(\xi),  
\ \ \ & \frac{d\eta}{dt} = -\langle F(\xi),\xi\rangle \\
\frac{dw}{dt} = e^{uL}\xi,
\ \ \ & \frac{du}{dt} = \eta.
\end{aligned}
\end{equation}
where $F(\xi) = L^*\xi -C\xi$, and the matrix $C$ depends only
on the tensor $B =\widehat \nabla- \nabla$, 
namely for $X = \xi +\eta b, \xi \in \mathfrak g_0$
we have $B(X,X) = \eta C\xi -\langle C\xi,\xi\rangle b$.  $\square$
\end{theorem}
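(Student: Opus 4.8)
The plan is to separate the geodesic system into its two natural halves. The equation $\widehat\nabla_v v=0$ lives entirely in the Lie algebra once the velocity is written in the left-invariant frame, and it produces the equations for $\xi$ and $\eta$; the reconstruction equation $\frac{dx}{dt}=v$, read off in the matrix coordinates of $G$, produces the equations for $w$ and $u$. These are exactly the two pieces of the cocycle structure of Proposition 3.1, the $(\xi,\eta)$-equations being the Euler flow and the $(w,u)$-equations the group part.

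For the Euler part I would write $v=\xi+\eta b$ with $\xi\in\mathfrak g_0$ and expand the covariant derivative of $v$ along the curve, in the left-invariant frame, as $\dot\xi+\dot\eta\,b+\nabla_v v$, where $\nabla_v v$ now denotes the algebraic bilinear expression on $\mathfrak g$ (components frozen, only the frame differentiated). Using the Milnor formulas (1) term by term, $\nabla_\xi\xi=\langle S\xi,\xi\rangle b$, $\eta\nabla_b\xi=\eta A\xi$, $\eta\nabla_\xi b=-\eta S\xi$, and $\eta^2\nabla_b b=0$, so that $\nabla_v v=-\eta L^*\xi+\langle S\xi,\xi\rangle b$ after the cancellation $A-S=-L^*$. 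Adding $B(v,v)=\eta C\xi-\langle C\xi,\xi\rangle b$ from Proposition 4.1 and setting $\widehat\nabla_v v=0$ gives, by components,
\[
\dot\xi-\eta L^*\xi+\eta C\xi=0,\qquad \dot\eta+\langle S\xi,\xi\rangle-\langle C\xi,\xi\rangle=0.
\]
The first is $\dot\xi=\eta(L^*-C)\xi=\eta F(\xi)$. For the second I would use that $S$ is the symmetric part of $L$, whence $\langle S\xi,\xi\rangle=\langle L\xi,\xi\rangle=\langle L^*\xi,\xi\rangle$, turning it into $\dot\eta=-\langle(L^*-C)\xi,\xi\rangle=-\langle F(\xi),\xi\rangle$.

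For the group part I would represent the curve $x(t)$ by the group matrix with blocks $w(t)$ and $e^{u(t)L}$, recall that $TG\cong G\times\mathfrak g$ is the identification by left translation, and compute $x^{-1}\dot x$ to match it against the Lie-algebra matrix with blocks $\xi$ and $\eta L$. Since $L$ commutes with $e^{uL}$, so $e^{-uL}Le^{uL}=L$, one finds
\[
x^{-1}\dot x=\left[\begin{array}{rr} 0 & 0 \\ e^{-uL}\dot w & \dot u\,L \end{array}\right],
\]
and comparing the blocks gives $\xi=e^{-uL}\dot w$ and $\eta=\dot u$, i.e. $\frac{dw}{dt}=e^{uL}\xi$ and $\frac{du}{dt}=\eta$.

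I expect the only real obstacle to be the bookkeeping in the covariant-derivative step: correctly splitting the derivative of $v$ along the curve into the derivative of the components and the frame contribution, assembling the several bilinear terms coming from (1), and recognizing the collapse $A-S=-L^*$ together with $\langle S\xi,\xi\rangle=\langle L^*\xi,\xi\rangle$ that turns the whole right-hand side into the single vector field $F(\xi)=L^*\xi-C\xi$. Everything else is a direct matrix multiplication.
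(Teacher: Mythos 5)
Your proof is correct and is precisely the ``direct calculation'' the paper invokes: the paper gives no written proof beyond citing Proposition 4.1 and the formulas (1), and you carry out exactly that computation, splitting the geodesic equation into the Euler part $\dot v + \nabla_v v + B(v,v)=0$ in the left-invariant frame (with the key collapse $A-S=-L^*$ and $\langle S\xi,\xi\rangle=\langle L^*\xi,\xi\rangle$) and the reconstruction part $x^{-1}\dot x = v$ in the matrix coordinates. Nothing is missing.
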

Note that in the last Theorem any  matrix $C$ can occur 
with an appropriate choice of the connection $\widehat\nabla$.

The equations (2) factor to the Lie algebra $\mathfrak g$ as 
{\it the Euler equations}, the first line of (2). 
Let us recall that the Euler equations are obtained by 
left translations of velocities along geodesics. It follows from the
left invariance of the connection that the resulting 
curves in $\mathfrak g$ must satisfy the Euler equation, 
see \cite{A}, Appendix 2.

Our final generalization is to replace the linear vector field 
$F(\xi)= F\xi, \xi \in \mathfrak g_0 = \mathbb R^n$, 
in (2) by a general (non-linear) vector field

$F: \mathfrak g_0 \to \mathfrak g_0$. 
Such a dynamical system will be called  an {\it $L-F$ system}. 
In the special case of a left invariant
para-metric connection, when the vector field $F$ is linear,
we will call it a {\it quadratic $L-F$ system}.

To summarize: the operator $L:\mathfrak g_0 \to \mathfrak g_0$
in the euclidean space $\mathfrak g_0$ determines  
the Lie group $G$ with a chosen left invariant metric.
The vector field $F$ determines then the second order equations
(2) in $TG$ which are preserved by any left translation of $G$,
and hence project naturally to $TM$, or $SM$. Note that to define
an $L-F$ system on $SM$ it is enough to have the vector field 
$F$ defined in the closed unit ball of $\mathfrak g_0$.

The crucial property of general $L-F$ systems on $SM$ (or $TM$) 
is their $J$-reversibility.

Actually the $L-F$ systems can be characterized as 
smooth second order left invariant equations on $SM$ which
are $J$-reversible.
\begin{proposition}
Any second order left-invariant equations on $SM$ 
which are also $J$-reversible define an $L-F$ system.
\end{proposition}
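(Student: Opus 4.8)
The plan is to reduce the statement to a normal-form analysis of the Euler flow. By Proposition 3.1, a left-invariant second order equation on $SG = G \times S^n$, where $S^n \subset \mathfrak g$ is the unit sphere, is a group extension $\Psi^t(g,v) = (g\,h(t,v), \psi^t(v))$ of the Euler flow $\psi^t$ on $S^n$; equivalently it is determined by a single smooth Euler vector field $E$ on $S^n$, which is tangent to the sphere because the flow preserves $SM$, i.e. $\langle E(v),v\rangle = 0$. Writing $v = (\xi,\eta)$ and $E = (E_1,E_2)$ with $E_1 \in \mathfrak g_0$ and $E_2 \in \mathbb R$, the $L-F$ form (the first line of (2)) is exactly $E_1 = \eta F(\xi)$, $E_2 = -\langle F(\xi),\xi\rangle$, so the goal is to recover such an $F$ from an arbitrary $E$ satisfying the hypotheses. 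Throughout I work on the cover $G$, since left-invariance and the fact that $\mathcal K$ descends to $M$ let me lift everything from $SM$ to $SG$.

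First I would show that the full $J$-reversibility of $\Psi^t$ is equivalent to the reversibility of the Euler flow alone under $j = -K$, acting on the fiber by $j(\xi,\eta) = (\xi,-\eta)$. Splitting $J(g,v) = (\mathcal K g, jv)$ and using that $\mathcal K$ is a group automorphism with $\mathcal K_* = K$ on $\mathfrak g$, the fiber component of $J\Psi^t = \Psi^{-t}J$ is precisely $j\psi^t = \psi^{-t}j$. The base component reduces to the cocycle identity $\mathcal K\,h(t,v) = h(-t, jv)$, and I would verify this automatically: both curves $k(t,v) = \mathcal K h(t,v)$ and $m(t,v) = h(-t,jv)$ solve the same left-invariant ODE $\dot y = DL_y\big(K\psi^t(v)\big)$ with $y(0) = e$ (here one uses $j = -K$ and the reversibility of $\psi^t$), so uniqueness forces $k = m$. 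Hence no condition beyond reversibility of the Euler flow survives on the base.

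Next I would extract the pointwise consequences of $jE(v) = -E(jv)$: comparing components shows that $E_1$ is odd and $E_2$ is even in $\eta$, and in particular $E_1(\xi,0) = 0$ on the equator $\{\eta = 0\}$. Since $\eta$ is a smooth function on $S^n$ vanishing transversally along the equator and $E_1$ is smooth and vanishes there, the division lemma yields $E_1 = \eta\,\tilde F$ with $\tilde F$ smooth on $S^n$; because $E_1$ and $\eta$ are both odd under $j$, the quotient $\tilde F$ is $j$-invariant. The tangency relation $\langle E_1,\xi\rangle + E_2\eta = 0$ then forces $E_2 = -\langle \tilde F,\xi\rangle$, which matches the $L-F$ form provided $\tilde F$ is the pullback of a genuine vector field $F$ of $\xi$ alone.

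The hard part will be precisely this last descent: I must produce a smooth $F$ on the \emph{closed} unit ball of $\mathfrak g_0$ with $\tilde F = F(\xi)$ on $S^n$. In the open upper hemisphere this is immediate, setting $F(\xi) = E_1(\xi,\sqrt{1-|\xi|^2})/\sqrt{1-|\xi|^2}$, but smoothness up to the boundary $|\xi| = 1$ is the delicate point, since the hemisphere-to-ball map $\xi \mapsto (\xi,\pm\sqrt{1-|\xi|^2})$ is only a branched cover there. The resolution is that $\tilde F$, being $j$-invariant, i.e. even in $\eta$, and smooth on $S^n$, is a smooth function of the $j$-invariants; by Whitney's theorem on even functions it depends smoothly on $\eta^2 = 1 - |\xi|^2$, so that it is the pullback of a smooth $F$ across $|\xi| = 1$. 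With $F$ in hand, $E$ is exactly the Euler field of the $L-F$ system attached to $F$, which completes the identification.
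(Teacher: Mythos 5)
Your proposal is correct and follows essentially the same route as the paper's proof: reduce via Proposition 3.1 to the Euler flow on the unit sphere, use $J$-reversibility to conclude that the $\xi$-component of the Euler field is odd in $\eta$, divide by $\eta$, and descend to a smooth vector field $F$ on the closed unit ball. The difference is only one of rigor: the paper simply asserts that $F(\xi) = U(\xi,\sqrt{1-\xi^2})/\sqrt{1-\xi^2}$ is smooth up to the boundary $|\xi|=1$, whereas you supply the justification (the division lemma along the equator plus Whitney's theorem on even functions, and the recovery of the $\eta$-equation from tangency to the sphere), which is precisely the argument the paper leaves implicit.
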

\begin{proof}
By Proposition 3.1 the equations factor from $SM$ to
the unit sphere 
$\mathbb S^n =
\{(\xi,\eta)\in \mathfrak g| \xi^2 + \eta^2 =1\}$. 
The resulting Euler system is also $J$-reversible, 
where $J(\xi,\eta) = (\xi,-\eta)$.

Using the coordinates
$(\xi_2,\dots,\xi_n,\eta)$ on the unit sphere in the neighborhood
of the equator $\{\eta =0\}$ we consider 
the smooth function 
\[
U(\xi,\eta) = \frac{d\xi}{dt}.
\]
By the $J$-reversibility we obtain that the function $U$
is odd in the $\eta$ variable, i.e., 
$U(\xi,-\eta) = -U(\xi,\eta)$.
It follows that if $U$ is a smooth function on
the unit sphere then the function 
\[
F(\xi) = \frac{1}{\sqrt{1-\xi^2}}U(\xi,\sqrt{1-\xi^2})
\] 
is well defined and smooth in the closed unit ball in $\mathfrak g_0$,
and $U(\xi,\eta) = \eta F(\xi)$.
\end{proof}

\section{Periodic solutions in $J$-reversible systems}

For $J$-reversible systems there is a very convenient way of 
searching for periodic solutions. It goes so far back 
that it is by now a part of the  mathematical folklore. 
It was formulated explicitly by DeVogelaere \cite{DeV},
and Devaney \cite{D}. We do it again for the convenience of the reader.

Let us consider the subset $\mathcal F$ of fixed points of the involution
$J$, $\mathcal F= \{ p|\ J(p) = p\}$.
\begin{theorem} 
Any trajectory of a $J$-reversible flow $\Psi^t$ which visits
$\mathcal F$ twice must be periodic. Moreover such a trajectory is invariant
under $J$ with the reversal  of time.
\end{theorem}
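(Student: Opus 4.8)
The plan is to prove the theorem by exploiting the defining relation $J \circ \Psi^t = \Psi^{-t} \circ J$ together with the hypothesis that the trajectory meets the fixed-point set $\mathcal F$ at two distinct times. Let me set up the situation: suppose the trajectory through $p_0$ visits $\mathcal F$ at times $0$ and $T > 0$, meaning $p_0 = \Psi^0(p_0) \in \mathcal F$ and $p_1 = \Psi^T(p_0) \in \mathcal F$. (If the two visits occur at general times $t_1 < t_2$, I would simply reparametrize so that the first is at time $0$ and the second at time $T = t_2 - t_1$.) The goal is to show $\Psi^{T}$ maps $p_1$ back in a way that forces periodicity with period $2T$.

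First I would use the fact that $p_0 \in \mathcal F$ means $J(p_0) = p_0$. Applying the reversibility relation at the point $p_0$ gives, for every $t$,
\[
J(\Psi^t(p_0)) = \Psi^{-t}(J(p_0)) = \Psi^{-t}(p_0).
\]
This is the key identity: it says the image under $J$ of the forward orbit of $p_0$ coincides with its backward orbit, so the whole trajectory is symmetric under $J$ composed with time reversal about the point $p_0$. This already establishes the ``moreover'' clause of the statement once periodicity is known. Second, I would evaluate this identity at $t = T$ and use $p_1 = \Psi^T(p_0) \in \mathcal F$:
\[
\Psi^{-T}(p_0) = J(\Psi^T(p_0)) = J(p_1) = p_1 = \Psi^T(p_0).
\]
Thus $\Psi^{-T}(p_0) = \Psi^{T}(p_0)$, and applying $\Psi^{T}$ to both sides yields $p_0 = \Psi^{2T}(p_0)$, which is precisely periodicity with period $2T$.

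I do not anticipate a genuine obstacle here; the argument is short and essentially algebraic once the two-visit hypothesis is translated into the two membership conditions $p_0, p_1 \in \mathcal F$. The only point requiring mild care is the degenerate case where the two visits coincide as a single point of $\mathcal F$ with $T = 0$, or where the trajectory is constant (an equilibrium), which should be handled or excluded by the phrasing ``visits $\mathcal F$ twice.'' The step most worth stating cleanly is the first one, the identity $J(\Psi^t(p_0)) = \Psi^{-t}(p_0)$, since it simultaneously delivers the periodicity (by specializing $t = T$) and the time-reversal symmetry asserted in the second sentence of the theorem. I would therefore organize the proof around that single identity and read off both conclusions from it.
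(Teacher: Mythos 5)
Your proof is correct and is essentially the same as the paper's: both rest on the single computation $\Psi^{T}(p_0) = J(\Psi^{T}(p_0)) = \Psi^{-T}(J(p_0)) = \Psi^{-T}(p_0)$, using the two membership conditions in $\mathcal F$ together with reversibility, which yields periodicity with period $2T$ and the time-reversal symmetry $J(p(t)) = p(-t)$ simultaneously. Your only variation is stating the general identity $J(\Psi^t(p_0)) = \Psi^{-t}(p_0)$ first and then specializing to $t = T$, which is a matter of presentation, not substance.
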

\begin{proof}
Let $p_0 \in \mathcal F$ be such that there is $t_0> 0$ 
with $\Psi^{t_0}(p_0) \in \mathcal F$. We have
\[
\Psi^{t_0}(p_0) =  J\left(\Psi^{t_0}(p_0)\right) = 
\Psi^{-t_0}J(p_0) = \Psi^{-t_0}(p_0).
\]
Hence the trajectory $p(t) = \Psi^t(p_0)$ is periodic with the period
$T = 2t_0$. Moreover
\[
Jp(t) = J\Psi^{t}(p_0) =  \Psi^{-t}(Jp_0) = 
\Psi^{-t}(p_0) = p(-t).
\]
\end{proof}

The minimal period of a trajectory of $p_0$ in the above proof
is $T= 2t_0$ if and only if $t_0> 0$ is the time of the first return
of $p_0$ to the set $\mathcal F$ of fixed points of $J$.

We apply this principle not to the full $L-F$ system
but only to its factor, the Euler equations
\begin{equation}
\frac{d\xi}{dt} = \eta F(\xi), \ 
\frac{d\eta}{dt} = -\langle F(\xi),\xi\rangle, 
\end{equation}
The involution $J$ descends naturally to the phase space of (3) 
 $(\xi,\eta) \in  \mathfrak g$,
and we denote it again by $J$, $J(\xi,\eta) = (\xi,-\eta)$. 
Clearly the Euler equation is 
$J$-reversible. The set of fixed points of $J$ 
is equal to $\mathcal F =\mathfrak g_0 = 
\{(\xi,\eta)| \eta =0\}$.

Let us consider the open unit ball $B\subset \mathfrak g_0$, with
the boundary, the unit sphere, $S = \partial B$.
\begin{definition}\rm
For a smooth vector field $F = F(\zeta)$ defined on  the closed
unit ball in $\mathfrak g_0$, we say 
that a point $\zeta_0$ in the open unit ball $B$ is {\it escaping} 
if the integral curve $\zeta = \zeta(s)$ of $F$ through $\zeta_0$ 
is defined in a finite closed interval $[s_-,s_+]\ni 0$, $\zeta(0) = \zeta_0$,
$\zeta(s) \in B$ for $s\in (s_-,s_+)$, 
the endpoints $\zeta(s_\pm)$ belong to the unit sphere $S =\partial B$,
and the vector field $F$ is transversal to the unit sphere $S$
at the endpoints of the integral curve, i.e., 
\begin{equation}
\langle F(\zeta(s_-)),\zeta(s_-)\rangle < 0, \ 
\langle F(\zeta(s_+)),\zeta(s_+)\rangle > 0.
\end{equation} 
The integral curve through an escaping point is called
an {\it escaping trajectory}.
\end{definition}

Let us note that for a given smooth vector field $F$ the set of escaping
points in the unit ball is open.

We have the following crucial 
\begin{theorem} 
For any escaping trajectory $\zeta(s), s\in [s_-,s_+]$ 
of the  vector field $f = F(\zeta),
\zeta \in \mathfrak g_0$, 
there are periodic functions $\eta(t)$ and
$u(t)$ with the period $T = 2 t_0$ such that 
$\eta(0) = \eta(t_0)=0, u(0) = s_-, u(t_0) = s_+, s_-\leq u(t) \leq s_+$,
and $(\xi(t),\eta(t))$ is a $T$-periodic solution of the Euler 
equation (3), where $\xi(t) = \zeta(u(t))$. Moreover
$\eta(t)$ is an odd function $\eta(-t) = -\eta(t), t\in \mathbb R,$
and $u(t)$ is an even function 
\[
u(t) = s_-+ \int_0^t\eta(s)ds.
\]
\end{theorem}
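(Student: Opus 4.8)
The plan is to look for a solution of the Euler equation (3) whose $\xi$-component stays on the given escaping trajectory. The key geometric observation is that the $\xi$-equation $\dot\xi = \eta F(\xi)$ is at every instant a scalar multiple of $F(\xi)$, so an orbit of the Euler flow whose initial point lies on the image of $\zeta$ must remain on that image. I would therefore make the ansatz $\xi(t) = \zeta(u(t))$ and determine the scalar $u(t)$. Substituting into (3) and using $\zeta'(s) = F(\zeta(s))$ turns the $\xi$-equation into $\dot u = \eta$ and the $\eta$-equation into $\dot\eta = -\langle F(\zeta(u)),\zeta(u)\rangle$. Writing $V(u) = \tfrac12|\zeta(u)|^2$ and noting $\langle F(\zeta(u)),\zeta(u)\rangle = V'(u)$, the pair collapses to the one-degree-of-freedom Newtonian system $\ddot u = -V'(u)$.

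This system has the conserved energy $\tfrac12\eta^2 + V(u)$, which is just the Euler integral $\tfrac12(\xi^2+\eta^2)$ restricted to the curve. I would fix the energy at $\tfrac12$ by the initial data $u(0)=s_-$, $\eta(0)=0$; since $|\zeta(s_-)|=1$ this is consistent, and the relation becomes $\eta^2 = 1-|\zeta(u)|^2$. Because $\zeta$ lies strictly inside $B$ for $s\in(s_-,s_+)$, the right-hand side is positive there and vanishes exactly at the endpoints, so $s_\pm$ are the turning points. The transversality hypothesis (4) says $V'(s_-)<0$ and $V'(s_+)>0$, hence $\dot\eta(0)=-V'(s_-)>0$: the orbit leaves $s_-$ with $\eta$ increasing, $u$ grows monotonically, and it can halt only on returning to the zero set of $\eta$, namely at $u=s_+$.

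The one step that needs genuine work is showing that the turning point $s_+$ is reached in finite time $t_0$. Here I would write $t_0 = \int_{s_-}^{s_+} du/\sqrt{1-|\zeta(u)|^2}$ and verify convergence of the improper integral at both ends. Near $s_+$ one has $1-|\zeta(u)|^2 = 2V'(s_+)(s_+-u) + o(s_+-u)$ with $V'(s_+)>0$, so the integrand grows like $(s_+-u)^{-1/2}$ and is integrable; the same holds at $s_-$. This is precisely where the transversality condition (4) is indispensable: without it the turning points could be degenerate and the travel time infinite, destroying periodicity. I expect this convergence estimate to be the crux of the argument.

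Once $t_0<\infty$ is established the orbit arrives at $(\zeta(s_+),0)$, a second point of the fixed-point set $\mathcal F=\{\eta=0\}$, so Theorem 5.1 applies to the $J$-reversible Euler flow and yields at once a $T$-periodic orbit with $T=2t_0$ together with the reversal symmetry $(\xi(-t),\eta(-t))=(\xi(t),-\eta(t))$. From $\xi(-t)=\xi(t)$ and the injectivity of $\zeta$ (an escaping trajectory meets no equilibrium, so $F\neq 0$ along it) I conclude that $u(-t)=u(t)$ is even and $\eta(-t)=-\eta(t)$ is odd, while $\dot u=\eta$ and $u(0)=s_-$ give the stated formula $u(t)=s_-+\int_0^t\eta(s)\,ds$; the confinement $s_-\le u(t)\le s_+$ follows from the trapping between the two turning points.
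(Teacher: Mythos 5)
Your proof is correct, and it rests on the same two pillars as the paper's: the identification of Euler orbits in the open upper hemisphere with time-reparametrized integral curves of $F$ (your relation $\dot u = \eta$ is exactly the paper's time change $\frac{ds}{dt}=\eta$, read in the opposite direction), and the reflection principle of Theorem 5.1 to double the arc from $(\zeta(s_-),0)$ to $(\zeta(s_+),0)$ into a periodic, $J$-symmetric orbit. Where the two genuinely differ is the treatment of the crux, the finiteness of the half-period $t_0$. The paper argues softly: if $t_+$ were infinite, then $\eta\to 0$ while condition (4) forces $\frac{d\eta}{dt}$ to tend to a strictly negative limit, which is absurd. You instead collapse the Euler equation along the curve to the one-degree-of-freedom Newtonian system $\ddot u = -V'(u)$ with $V(u)=\tfrac12|\zeta(u)|^2$, pin the energy at $\tfrac12$, and obtain the half-period as the quadrature $t_0=\int_{s_-}^{s_+}\left(1-|\zeta(u)|^2\right)^{-1/2}du$, whose convergence follows from the nondegenerate square-root turning points that (4) guarantees. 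Your version is more quantitative: it produces the period as an explicit integral and makes transparent exactly why (4) is indispensable, at the cost of extra bookkeeping (energy normalization, monotonicity of $u$, and the injectivity of $\zeta$ used for the evenness of $u$ --- note that $F\neq 0$ along the curve gives only local injectivity, and global injectivity needs the remark that a self-intersection would make $\zeta$ periodic, contradicting transversal exit; you can bypass this entirely, as the paper does, by observing that $u(t)=s_-+\int_0^t\eta(s)\,ds$ is automatically even once $\eta$ is odd). The paper's argument is shorter but leaves the period implicit; both are complete.
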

\begin{proof}
We restrict the Euler equation (3) to the upper half of the  
unit sphere in $\mathfrak g$,
i.e., to the subset $\{(\zeta,\eta)| \zeta^2 +\eta^2 =1, \eta > 0 \}$.
In this submanifold we can use $\zeta$ as coordinates, and we 
introduce there the time change $\frac{ds}{dt} = \eta$.
After this time change the Euler equation becomes the following 
system
\begin{equation}
\frac{d\zeta}{ds} = F(\zeta).
\end{equation}
Hence the trajectory $\zeta(s), s_-< u < s_+,$ of 
this vector field in the open  ball 
$B \subset \mathfrak g_0$
gives rise to the trajectory $(\xi(t), \eta(t)), t_-< t < t_+,$ 
of the Euler equation in the upper half
of the unit sphere in $\mathfrak g$, with 
$\lim_{t \to t_-}\eta(t) = \lim_{t \to t_+}\eta(t) = 0$. 
It may happen (and it does) that $t_\pm = \pm\infty$. 
This is excluded in our case by the condition (4) which gives us 
$\lim_{t \to t_-}\frac{d\eta}{dt} > 0, \lim_{t \to t_+}\frac{d\eta}{dt} < 0$. 
Hence we have a finite time interval $[t_-,t_+]$ and we can
shift it to the time interval $[0,t_0], t_0 = t_+-t_-$.

By Theorem 5.1 this trajectory
extends to the $T = 2t_0$ periodic  solution of the Euler equation 
$\left(\xi(t),\eta(t)\right), t\in \mathbb R$. 
Putting $u(t) = s_-+ \int_0^t\eta(s)ds$ we 
have
$\xi(t) = \zeta(u(t))$ for $0 \leq t \leq  t_0$.  
Since the periodic solution is invariant under $J$ with the reversal of time, 
we obtain that $\xi(-t) = \xi(t), \eta(-t) = -\eta(t)$. It follows that 
$\int_{-t_0}^{t_0} \eta(s)ds =0$, and consequently 
$u(t)$ is a $T$ periodic function $0\leq u(t) \leq u_0$.
Hence we have also 
$\xi(t) = \zeta(u(t))$ for any $t\in \mathbb R$. 
\end{proof}
In simple terms what this proof reveals is that
an integral curve of $F$ is up to a time change
also a trajectory of the respective Euler flow
in the upper semi-sphere.
For escaping trajectories of $F$ the 
solution of the Euler equation crosses transversally 
the ``equator'' $\{\eta = 0\}$ and in the lower 
semi-sphere it follows the same integral curve of 
$F$ but in the reversed direction. Hence it must be 
a periodic trajectory of the Euler flow.

In the case of a quadratic $L-F$ system we get the following 
\begin{corollary} 
If the linear vector field $F(\xi) = F\xi$ has eigenvalues both with 
positive and negative real parts, then for the Euler equation (3) 
the periodic trajectories fill an open and dense subset of 
$\mathbb S^n \subset \mathfrak g$.

If all the eigenvalues of $F$ are on the imaginary axis 
then for the Euler equation (3)
an open and dense subset of 
$\mathbb S^n \subset \mathfrak g$ is filled with trajectories
which are either periodic or quasi-periodic after an
appropriate smooth time change.
\end{corollary}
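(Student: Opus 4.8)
The plan is to reduce both statements to a density analysis of the linear flow $e^{sF}$ on the open unit ball $B \subset \mathfrak{g}_0$, and then to transport the conclusion to $\mathbb{S}^n$. Recall from the proof of Theorem 5.2 that $\xi \mapsto (\xi, \sqrt{1-|\xi|^2})$ is a diffeomorphism of $B$ onto the open upper hemisphere $\{\eta > 0\}$ of $\mathbb{S}^n$, and that under it the Euler equation (3), after the time change $\frac{ds}{dt} = \eta$, becomes the linear system $\frac{d\zeta}{ds} = F\zeta$. Thus each integral curve of $F$ in $B$ is, up to this time change, a piece of an Euler trajectory in the hemisphere: an escaping curve in the sense of Definition 5.1 is completed by Theorem 5.2 into a periodic Euler trajectory, while a curve that remains in $B$ for all $s$ gives an Euler trajectory confined to the open hemisphere. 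So it suffices to show that the points of $B$ lying on escaping curves, together with (for the second statement) those on bounded curves of the latter kind, form an open and dense set, and then to propagate density across the equator using $J$.

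For the first statement I would split $\mathfrak{g}_0 = E^s \oplus E^c \oplus E^u$ into the stable, center and unstable subspaces of $F$, with $E^s \ne \{0\}$ and $E^u \ne \{0\}$ by hypothesis. A point whose forward orbit never leaves $B$ must have vanishing $E^u$-component, hence lies in the proper subspace $E^c \oplus E^s$ (and in an even smaller set if the center part is non-semisimple); symmetrically, a point whose backward orbit never leaves $B$ lies in $E^c \oplus E^u$. The union of these two proper subspaces is nowhere dense, so off it every orbit exits $B$ in both time directions. Removing in addition the nowhere-dense locus where the first exit is tangent to $S$, i.e. where $\langle F\zeta, \zeta\rangle = 0$ at the first hitting point (a codimension-one condition cut out along the flow), leaves an open set of escaping points, dense in $B$; openness was already recorded after Definition 5.1. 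By Theorem 5.2 each such point lies on a periodic Euler trajectory.

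For the second statement, when $F$ is non-semisimple its orbits grow polynomially, leave every ball, and (after discarding the same tangency locus) are escaping, hence periodic. When $F$ is semisimple with purely imaginary spectrum it preserves a positive-definite quadratic form $Q$, so its orbits lie on $Q$-ellipsoids. A point whose $Q$-ellipsoid lies strictly inside $B$ generates a bounded orbit confined to the open ball, on which $\eta = \sqrt{1-|\xi|^2}$ is bounded away from zero; the time change is then smooth and nonsingular and the orbit is quasi-periodic in $t$. A point whose $Q$-ellipsoid meets the exterior of $B$ generates an orbit crossing $S$, which is escaping and hence periodic. The exceptional set where the $Q$-ellipsoid is tangent to the Euclidean sphere $S$ is again nowhere dense, so the good points fill an open dense subset of $B$.

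Finally I would transport these open dense subsets of $B$ to the sphere. The hemisphere chart carries them to an open dense subset of the open upper hemisphere covered by periodic or quasi-periodic trajectories; the reversibility $J$, which maps Euler trajectories to Euler trajectories with time reversed, sends this set to an open dense subset of the lower hemisphere. Since the union of the two open hemispheres is the complement of the equator $\{\eta = 0\} \cong \mathbb{S}^{n-1}$, which is nowhere dense in $\mathbb{S}^n$, the resulting set is open and dense in $\mathbb{S}^n$, as claimed. The main obstacle is the bookkeeping hidden in the words ``nowhere dense'': one must verify that the tangential-first-exit locus in the first statement and the ellipsoid-tangency locus in the second really have empty interior, and, in the quasi-periodic regime, that the time change $\frac{ds}{dt} = \eta$ is genuinely smooth and invertible so that ``quasi-periodic after a smooth time change'' is legitimate. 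Everything else is the openness already noted and elementary linear algebra of $e^{sF}$.
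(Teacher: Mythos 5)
Your first statement is handled essentially as in the paper: the paper likewise argues that hyperbolicity forces typical orbits to reach the sphere in finite time in both directions, and notes that condition (4) holds on an open dense subset of $S$ unless $F^*=-F$; your spectral splitting $E^s\oplus E^c\oplus E^u$ just makes this explicit, and your deferral of the tangential-first-exit locus is no worse than the paper's own treatment of that point. The genuine gap is in the second statement, in the case of semisimple $F$. You claim that a point whose $Q$-ellipsoid meets the exterior of $B$ generates an orbit crossing $S$. This is false as soon as $n\geq 3$: the closure of the orbit $e^{sF}\zeta_0$ is not the ellipsoid $\{Q=Q(\zeta_0)\}$ but the set $\mathcal T\zeta_0$, where $\mathcal T$ is the closure of $\{e^{sF}\}$ in $GL(\mathfrak g_0)$, and this is a torus of in general much smaller dimension than the ellipsoid. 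For instance, for non-resonant eigenvalues $\pm i\omega_1,\pm i\omega_2$ in $\mathbb R^4$ the orbit closures are the $2$-tori $\{|z_1|_Q=c_1,\ |z_2|_Q=c_2\}$ sitting inside $3$-dimensional ellipsoids; when $Q$ is not proportional to the Euclidean form, such a torus can lie strictly inside $B$ while its ellipsoid leaves $B$. So a large part of your set ``ellipsoid meets the exterior'' consists of points whose orbits never reach $S$ at all; they are not escaping, and your argument says nothing about them. Worse, the genuinely exceptional points --- those with $\sup_s|e^{sF}\zeta_0|=1$, whose orbit closures touch the sphere and for which the time change degenerates --- lie inside this same set and are \emph{not} contained in your ellipsoid-tangency locus, so the claimed open dense set of good points is not established by your trichotomy.

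The repair is to run the trichotomy on orbit closures rather than ellipsoids. For semisimple $F$ with imaginary spectrum, $\mathcal T$ is a compact abelian group, so $M(\zeta):=\sup_s|e^{sF}\zeta|=\max_{T\in\mathcal T}|T\zeta|$ is a \emph{norm} on $\mathfrak g_0$. Then $\{M<1\}$ is open and its orbits stay in a compact subset of $B$ (there $\eta$ is bounded away from zero, the time change is smooth and invertible, and the Euler trajectory is quasi-periodic after the change); $\{M>1\}$ is open and its orbits leave the closed ball in both time directions (forward and backward orbits are dense in $\mathcal T\zeta_0$), hence, after deleting the tangency locus, are escaping and yield periodic Euler trajectories; and the bad set $\{M=1\}$ is the unit sphere of the norm $M$, hence nowhere dense. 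This is the content of the paper's assertion that the bounded and the escaping trajectories form two open sets with dense union; your ellipsoid version proves it only when orbit closures are full ellipsoids (e.g.\ $n=2$). Two smaller points: in the non-semisimple case, ``its orbits grow polynomially and leave every ball'' holds only off the proper invariant subspace spanned by the genuine eigenvectors, so it should be stated, as in your first part, off a nowhere dense set; and ``the orbit is quasi-periodic in $t$'' overstates what is true --- a smooth time change of a quasi-periodic flow need not be quasi-periodic, which is precisely why the corollary (and the paper's proof) only claim quasi-periodicity after an appropriate smooth time change.
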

\begin{proof}
Let us first note that for linear vector field $F$ the condition (4)
is satisfied on an open and dense subset of 
the unit sphere $S \subset \mathfrak g_0$, unless $F^*=-F$.

If the matrix $F$ has eigenvalues with both positive and negative 
real parts then the set of escaping points in the unit ball 
$B\subset \mathfrak g_0$ is not only open but also dense in the unit ball.
That is so because the instability as $t\to \pm\infty$
forces typical solutions to reach the boundary 
unit sphere at some finite time both in the future and in the past.
By Theorem 5.2 they give rise to the periodic  solutions
of the Euler equations.

If the matrix $F$ has only purely imaginary eigenvalues,
and there is no resonance, then the linear system (5)
has only quasi-periodic solutions. In general there is an open 
set of trajectories which stay in the unit ball for all times,
and an open set of escaping trajectories. The union of these
open sets is dense in the unit sphere. 
The only exception is the case of the skew-symmetric matrix $F^*=-F$,
when there are no escaping trajectories.  
Note that while the solution of (5) is quasi-periodic
the solution of the Euler equation need not be quasi-periodic
since a time change is involved.

In the resonant case we have instability for $t\to \pm\infty$, and
hence there is a dense subset of escaping trajectories, both in 
the cases of purely imaginary nonzero eigenvalues and of the zero eigenvalue. 
By Theorem 5.2 we get an open and dense subset of the unit sphere 
$\mathbb S^n \subset  \mathfrak g$ 
filled with periodic solutions of the Euler equations.
\end{proof}

\section{Robust integrability of $L-F$ systems}
The notion of integrability of a dynamical system
has a long history and several different versions. 
In hamiltonian dynamics the main concept of integrability 
is associated with the Liouville-Arnold theorem, where
families of invariant tori carrying quasi-periodic motions
appear. The issues involved in the general definition 
of integrability in hamiltonian dynamics were explored
by Bogoyavlensky, \cite{Bo1},\cite{Bo2}, and Fasso \cite{F}.

We study a family of reversible, non-hamiltonian 
systems  in which quasi-periodic motions occur robustly.  
We introduce the following working definition
\begin{definition}\rm
We  call a dynamical system {\it semi-integrable} 
if an open subset $U$ of the phase space 
is filled with invariant tori carrying the
quasi-periodic motions. If $U$ is also dense in the
phase space then the dynamical system will be called 
{\it integrable}.
\end{definition}

It turns out that any periodic solution of the Euler equation constructed in 
Theorem 5.2 gives rise to quasi-periodic solutions of the $L-F$ system.
This is the contents of the following theorems.
\begin{theorem}
If a vector field $F$ in 
$\mathfrak g_0$ has escaping points in the unit ball 
then the $L-F$ system on $SM$ is semi-integrable.

If the vector field $F$ has a dense set of escaping points
in the unit ball then  the $L-F$ system on $SM$ is integrable.
\end{theorem}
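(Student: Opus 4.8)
The plan is to lift each periodic solution of the Euler equation produced by Theorem 5.2 to the full $L-F$ system and to exhibit it inside an invariant $(n+1)$-torus on which the flow is smoothly conjugate to a constant linear flow. Fix an escaping trajectory of $F$ and let $(\xi(t),\eta(t))$ be the $T$-periodic Euler solution it determines; recall from Theorem 5.2 that $\eta$ is odd with $\int_0^T\eta=0$, so that $u(t)=s_-+\int_0^t\eta(s)\,ds$ is itself $T$-periodic. Along any full trajectory of (2) whose velocity traces this Euler orbit the remaining equations read $\frac{du}{dt}=\eta$, $\frac{dw}{dt}=e^{uL}\xi$, and the first step is to isolate the conserved quantity $c=u-h(\phi)$, where $\phi\in\mathbb R/T\mathbb Z$ is the phase along the Euler orbit (so $\frac{d\phi}{dt}=1$) and $h(\phi)=\int_0^\phi\eta(s)\,ds$ is the $T$-periodic primitive of $\eta$. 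Indeed $\frac{d}{dt}\bigl(u-h(\phi)\bigr)=\eta-h'(\phi)=0$. For fixed $c$ the base coordinate $u=c+h(\phi)$ is slaved to $\phi$, so the invariant set is parametrized by $(w,\phi)\in\mathbb T^n\times(\mathbb R/T\mathbb Z)$, an $(n+1)$-dimensional torus.

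On such a torus the induced flow is the skew product $\frac{d\phi}{dt}=1$, $\frac{dw}{dt}=V_c(\phi)$ with $V_c(\phi)=e^{(c+h(\phi))L}\xi(\phi)$ a smooth $T$-periodic $\mathbb R^n$-valued field. The next step is to linearize it. Setting $\bar V_c=\frac1T\int_0^TV_c$ and $p_c(\phi)=\int_0^\phi\bigl(V_c(s)-\bar V_c\bigr)\,ds$, which is a well-defined $T$-periodic function precisely because $V_c-\bar V_c$ has zero mean, the shear $w=y+p_c(\phi)$ is a diffeomorphism of $\mathbb T^n\times(\mathbb R/T\mathbb Z)$ that conjugates the flow to the constant flow $\frac{d\phi}{dt}=1$, $\frac{dy}{dt}=\bar V_c$. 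Thus each torus carries a linearizable, hence quasi-periodic, motion (in the resonant case the $(n+1)$-torus further foliates into invariant subtori on which the linear flow is minimal), as the definition of semi-integrability requires.

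It then remains to count dimensions and deduce openness and density. By Theorem 5.2 the escaping points form an open subset of the unit ball, so the velocities lying on the associated periodic Euler orbits fill an open subset $W\subset\mathbb S^n\subset\mathfrak g$. I would then verify that the assignment $(w,u;\xi,\eta)\mapsto(\gamma,c,\phi,w)$ — recording the Euler orbit $\gamma$ through $(\xi,\eta)$, the phase $\phi$ with $(\xi,\eta)=\gamma(\phi)$, the invariant $c=u-h(\phi)$, and $w$ — is a diffeomorphism from the preimage of $W$ in $SM$ onto the family of $(n+1)$-tori indexed by the $n$-dimensional parameter space $\{(\gamma,c)\}$ (the orbit space of $W$ being $(n-1)$-dimensional and $c$ contributing one further transverse direction). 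Since $(n+1)+n=2n+1=\dim SM$, this presents the open preimage of $W$ as a union of the invariant tori, giving semi-integrability; if the escaping points are dense in the unit ball, then $W$ is dense in $\mathbb S^n$ and its preimage is dense in $SM$, giving integrability.

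I expect the main obstacle to lie in the transversal bookkeeping of the last paragraph: showing that the periodic Euler orbits foliate $W$ with a smooth orbit space and that $(\gamma,c)$ furnish $n$ genuinely independent transverse coordinates, so that the tori fill an \emph{open} set and not merely cover it. The conserved quantity $c$ is immediate from $\int_0^T\eta=0$, and the linearization is a routine cohomological computation over the circle; the delicate point is the regularity and independence of the transverse coordinates near a periodic escaping orbit, where one must control how the period and the orbit vary.
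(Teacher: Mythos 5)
Your proposal is correct and follows essentially the same route as the paper: each periodic Euler solution from Theorem 5.2 is lifted to the full system, where fixing the conserved quantity $c=u-h(\phi)$ (the paper equivalently fixes $a=u(0)$) produces an invariant $(n+1)$-torus, and the openness/density of the set of escaping points transfers to the set covered by these tori. The only difference is presentational: the paper identifies the flow on each torus as the constant-roof suspension of the toral translation by $c_0=\int_0^{T}e^{u(s)L}\xi(s)\,ds$, using the group-extension structure of Proposition 3.1, while you conjugate it to a constant linear flow directly via the shear $w=y+p_c(\phi)$ --- two standard descriptions of the same quasi-periodic motion.
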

For quadratic $L-F$ system we can formulate an effective criterion
of integrablity. Moreover the integrablity persists under small 
perturbations in the space of $L-F$ systems.
\begin{theorem}
If for a linear vector field $F = F\xi, \xi \in \mathfrak g_0$ 
the matrix $F$ has eigenvalues with both positive and negative 
real parts then the quadratic $L-F$ system on $TM$ is integrable.
Moreover the invariant tori are common level sets of real-analytic 
first integrals.

Any small perturbation of such a quadratic integrable $L-F$ system in the space
of all $L-F$ systems must be integrable. 
\end{theorem}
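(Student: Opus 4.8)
The plan is to read off the first assertion from Corollary 5.3 together with the first theorem of Section 6 (Theorem 6.1), to produce the first integrals explicitly, and then to reduce robustness to the persistence of a dense escaping set, which I control with a quadratic Lyapunov form built from the hyperbolic part of the spectrum of $F$.

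First, since $F$ has eigenvalues with both positive and negative real parts, Corollary 5.3 supplies a dense set of escaping points in the unit ball, so Theorem 6.1 already gives integrability on $SM$. To upgrade to $TM$ I exploit the velocity-scaling symmetry available because the system is quadratic: as $F$ is linear, $(\xi,\eta)\mapsto(\lambda\xi,\lambda\eta)$ combined with $t\mapsto\lambda t$ sends solutions to solutions, the base curve in $M$ being unchanged and only its speed scaled. Consequently the flow on the radius-$r$ sphere bundle is a constant time-reparametrization of the flow on $SM$, the radial function $\sqrt{|\xi|^2+\eta^2}$ is a first integral, and the $SM$-tori sweep out invariant tori on every sphere bundle; together they fill an open dense subset of $TM$, quasi-periodicity being preserved under a constant rescaling.

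For the analyticity claim I propose that the invariant tori are the common level sets of the $n+1$ real-analytic functions $I_0=|\xi|^2+\eta^2$ and the vector-valued $\Xi=e^{-uF}\xi$. Conservation of $I_0$ is the computation already used in Section 5, while $\Xi$ is conserved by the one-line identity $\frac{d}{dt}(e^{-uF}\xi)=e^{-uF}(-\dot u\,F\xi+\dot\xi)=e^{-uF}(-\eta F\xi+\eta F\xi)=0$, using $\dot u=\eta$, $\dot\xi=\eta F\xi$, and that $F$ commutes with $e^{-uF}$. Both are real-analytic on $TM$: they do not depend on $w$, and $u\mapsto e^{-uF}$ is entire. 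On $\{\Xi=c,\ I_0=r^2\}$ the velocity is slaved to the base, $\xi=e^{uF}c$, so the set projects to the planar curve $\{\eta^2+|e^{uF}c|^2=r^2\}$ in the $(u,\eta)$-plane, multiplied by the toral factor $w\in\mathbb T^n$. Because $F$ has eigenvalues of both signs, $u\mapsto|e^{uF}c|^2$ is proper with limit $+\infty$ as $u\to\pm\infty$ for generic $c$, so this curve is a closed loop, its turning points $\eta=0$ being exactly the transversal escaping endpoints of Definition 5.1; thus the level set is the torus $S^1\times\mathbb T^n$. It remains to check that $dI_0,d\Xi_1,\dots,d\Xi_n$ are independent on the escaping region (where $\eta\neq0$), which identifies these level sets as genuine invariant tori.

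Finally, a perturbation within the class replaces $F$ by a nearby smooth field $\tilde F$ on the closed ball, with $L$ fixed. By Theorem 6.1 it suffices to show $\tilde F$ retains a dense escaping set; since escaping points form an open set for any fixed field (as remarked after Definition 5.1), the issue is density, equivalently that the forward- and backward-trapped sets stay nowhere dense. Here I use that some eigenvalue has positive and some negative real part: with adapted inner products on the unstable and stable generalized eigenspaces, $Q=|P_u\zeta|^2-|P_s\zeta|^2$ satisfies $\langle\nabla Q,F\zeta\rangle\ge 2\alpha(|P_u\zeta|^2+|P_s\zeta|^2)$ for some $\alpha>0$. This strict estimate is a $C^1$-open condition, so for $\tilde F$ near $F$ one still has $\dot Q\ge\alpha(|P_u\zeta|^2+|P_s\zeta|^2)$, whence $\{Q>0\}$ consists of forward-escaping points and $\{Q<0\}$ of backward-escaping points, robustly. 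The hard part, which I expect to be the main obstacle, is showing that the residual trapped set — orbits that never reach $\{|Q|\ge\varepsilon\}$, and which the Lyapunov estimate then forces into the center region $\{P_u\zeta=P_s\zeta=0\}$ — is nowhere dense: the perturbation destroys the spectral structure, may move or eliminate the fixed point at the origin, and a possible center subspace prevents uniform expulsion. My plan is to confine all recurrence to a thin center-type invariant set of codimension $\ge 2$ (by integrating $\dot Q\ge 0$ against invariant measures) and to use the uniform transverse expansion off that set to push orbits out, concluding that the trapped sets are nowhere dense and hence that escaping points remain dense, so Theorem 6.1 again yields integrability.
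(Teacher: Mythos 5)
Your first three steps are correct and essentially retrace the paper's own route. The paper proves this theorem jointly with Theorem 6.1: escaping trajectories give periodic Euler solutions (Theorem 5.2), the group-extension structure of Proposition 3.1 makes $\Psi^T$ a translation on the toral leaves, and sweeping a leaf by the flow produces the $(n+1)$-dimensional quasi-periodic tori; the first integrals the paper then exhibits are $\Phi(w,u;\xi,\eta)=e^{-uF}\xi$, exactly your $\Xi$. Your additions are sound refinements rather than departures: the scaling $(\xi,\eta)\mapsto(\lambda\xi,\lambda\eta)$, $t\mapsto \lambda t$ does transfer integrability from $SM$ to all sphere bundles in $TM$ (the paper leaves this implicit), the conservation computations for $\Xi$ and $I_0=|\xi|^2+\eta^2$ are correct, and your identification of the joint level sets $\{\Xi=c,\ I_0=r^2\}$ as $\mathbb T^n\times S^1$ for generic $c$ is a more explicit version of the paper's statement that the level sets are unions of the constructed tori.

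The genuine gap is in the last assertion (robustness), and it is twofold. First, your key estimate is not, as claimed, a $C^1$-open condition. For the linear field, $\langle\nabla Q,F\zeta\rangle\ge 2\alpha\bigl(|P_u\zeta|^2+|P_s\zeta|^2\bigr)$ holds, but both sides vanish to \emph{second} order on the set $\{P_u\zeta=P_s\zeta=0\}$ (the origin, or the whole center disk $E^c\cap B$, which the hypothesis allows since $F$ may also have purely imaginary eigenvalues). A perturbation $g=\tilde F-F$ with $\|g\|_{C^1}<\delta$ need not vanish at the origin and changes the left side by $\langle\nabla Q,g(\zeta)\rangle$, which is of \emph{first} order in $|P_u\zeta|+|P_s\zeta|$; hence the perturbed inequality fails on a $\delta$-neighborhood of the degenerate set, exactly where trapped orbits can live, so $\{Q>0\}$, $\{Q<0\}$ need not consist of exiting points near that set. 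Second, even where $Q$ is strictly monotone you only conclude finite exit times; Definition 5.1 also requires the exits to be \emph{transversal} to the unit sphere (condition (4)), and your argument never verifies this, so "forward-escaping" in your sense is weaker than escaping. You candidly flag the remaining step --- that the residual trapped set is nowhere dense --- as an unresolved obstacle and offer only a plan, so the third claim of the theorem is not established. For comparison: the paper derives robustness from Theorem 6.1 as you do, resting on the openness of the escaping condition (remark after Definition 5.1) and the instability mechanism of Corollary 5.3 applied to the perturbed field; what actually closes the argument is a cone-field estimate for differences of solutions, $\frac{d}{ds}(\zeta-\zeta')=F(\zeta-\zeta')+\bigl(g(\zeta)-g(\zeta')\bigr)$ with $|g(\zeta)-g(\zeta')|\le\delta|\zeta-\zeta'|$, which shows that any two forward-trapped points have difference outside the unstable cone, so the forward- (resp.\ backward-) trapped set lies on a Lipschitz graph over $E^s\oplus E^c$ (resp.\ $E^u\oplus E^c$), hence has measure zero; combined with a measure-zero bound on tangential exits this makes the non-escaping set of measure zero and therefore the escaping set dense for every nearby $\tilde F$. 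Some argument of this kind (not your $Q$-estimate, which degenerates) is what the robustness claim requires.
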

The last theorem covers the examples of geodesic flows in Butler,\cite{Bu1},
and Bolsinov and Taimanov, \cite{B-T}.

\begin{corollary}
If the operator $L:\mathfrak g_0 \to \mathfrak g_0$ has eigenvalues with
both positive and negative real parts then for the left-invariant metric on 
$M$ the geodesic flow on $TM$ is integrable.

Moreover  the $(n+1)$-dimensional invariant tori  
are common level sets of  real-analytic 
first integrals in involution. 
\end{corollary}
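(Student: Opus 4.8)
The plan is to reduce this corollary to the already-established Theorem 6.3 by verifying that the geodesic flow of the left-invariant metric is precisely a quadratic $L-F$ system whose defining matrix $F$ coincides with $L^*$, together with the observation that the eigenvalue hypothesis on $L$ transfers to $L^*$. First I would recall from Theorem 4.2 that for a left-invariant metric the connection is just the Levi-Civita connection, i.e. the tensor $B = \widehat\nabla - \nabla$ vanishes, so $C = 0$ and the vector field is the \emph{linear} field $F(\xi) = L^*\xi$. Thus the geodesic flow is a quadratic $L-F$ system in the sense of Section 4, and the relevant matrix is $F = L^*$. Since $L^*$ and $L$ have the same characteristic polynomial, they share eigenvalues; hence the hypothesis that $L$ has eigenvalues with both positive and negative real parts is equivalent to the same statement for $F = L^*$. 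This places us exactly in the situation of Theorem 6.3, which yields integrability of the system on $TM$ and the real-analyticity of the first integrals whose common level sets are the invariant tori.

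The remaining content beyond Theorem 6.3 is the two finer assertions: that the invariant tori have dimension $n+1$, and that the first integrals are \emph{in involution}. For the dimension count, I would argue structurally: the phase space $TM$ has dimension $2(n+1)$, and the construction in Theorem 5.2 shows that each escaping trajectory of $F$ produces, via the Euler flow on the $n$-sphere in $\mathfrak g$, a periodic orbit of the factor system; lifting through the group-extension structure of Proposition 3.1 (the cocycle $h(t,v)$ with values in $G$) attaches to each such periodic Euler orbit a family of motions on $M$ that wind around the toral leaves $\mathbb{T}^n$ of the product structure $M \cong \mathbb{T}^n \times \mathbb{R}$. Combining the one extra periodic direction coming from the Euler orbit with the $n$ toral directions gives invariant tori of dimension $n+1$. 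I would make this explicit by exhibiting the $n+1$ independent frequencies: the $n$ rotation numbers arising from the linear winding on $\mathbb{T}^n$ produced by the cocycle, plus the single frequency $2\pi/T$ of the periodic Euler solution.

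For the involutivity, I would use that in the geodesic (Levi-Civita) case the flow on $TM \cong T^*M$ is genuinely Hamiltonian with respect to the canonical symplectic form $\omega$ introduced in Section 4, unlike a general para-metric $L-F$ system. The first integrals produced in Theorem 6.3 are, on the one hand, the functions governing the Euler factor (these are functions of $(\xi,\eta)$ alone, coming from the periodicity of the reduced motion), and on the other hand the integrals recording the winding data on the toral leaves. Since left translations by $G_0$ are isometries preserving every toral leaf (Section 2), they generate commuting Hamiltonian symmetries whose momentum maps provide $n$ Poisson-commuting integrals; these Poisson-commute with the energy and with the Euler-type integrals because the latter are invariant under the $G_0$-action. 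I would verify involutivity by checking that these two groups of integrals have disjoint functional dependence on the fiber versus base directions and that the $G_0$-momenta mutually commute as the group $G_0$ is abelian.

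The step I expect to be the main obstacle is establishing involutivity rigorously, since it requires more than the bare integrability of Theorem 6.3: one must identify an explicit Poisson-commuting family of exactly $n+1$ functionally independent integrals and confirm their independence on an open dense set. The subtlety is that Theorem 6.3 guarantees real-analytic first integrals and invariant tori but does not by itself organize them into an involutive system; the extra input is the genuinely Hamiltonian nature of the geodesic flow (absent for general connections) combined with the abelian symmetry group $G_0$. I would therefore devote the bulk of the argument to writing down the momentum map of the $G_0$-action in the coordinates $(w,u;\xi,\eta)$, showing its components are $\{\cdot,\cdot\}_\omega$-commuting, and checking that together with the energy and one Euler integral they attain rank $n+1$ on the open dense escaping set.
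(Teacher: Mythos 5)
Your proposal is correct and follows essentially the same route as the paper: the paper likewise identifies the geodesic flow as the quadratic $L$-$F$ system with $F=L^*$ (so Theorem 6.2 gives integrability), and then obtains involutivity exactly as you suggest, by passing to the Hamiltonian picture on $T^*M$, noting that $H$ is independent of $w$ so the momenta $p_w$ together with $H$ form $n+1$ Poisson-commuting integrals, with the Legendre transform giving $p_w=e^{-uL^*}\xi$, i.e.\ the same integrals as in Theorem 6.2. The step you feared would be the main obstacle is in fact immediate once one observes that $w$ is cyclic in the canonical coordinates $(w,u;p_w,p_u)$.
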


It seems that within the class of quadratic integrable systems, 
for an open dense subset of operators $F$ the system is  
non-degenerate, in the sense of having a rich family of frequencies on $SM$.
The reversible version of the KAM theory,
developed by Moser, \cite{M}, and Sevryuk, \cite{S},
would be applicable upon the establishment of the non-degeneracy. 
The allowed perturbations would be $J$-reversible second order equations, 
but not necessarily left-invariant.

The calculations required to establish the non-degeneracy 
are cumbersome, and we did not find a satisfactory way to do it.

We will give a joint proof of Theorems 6.1 and 6.2.
\begin{proof}
We will establish that the periodic solution of the Euler equation
in $\mathfrak g$ constructed in Theorem 5.2 is covered by invariant tori 
in $TM$ carrying quasi-periodic solutions. 
To that end, given the $T$-periodic solution $v(t)\in \mathfrak g,
v(t) =(\xi(t),\eta(t))$
of Theorem 5.2, $T= 2t_0$, let us integrate the equations (2)  
of the $L-F$ system.
We get 
\[
\begin{aligned}
& u(t) = a+ a_0(t),  \  a_0(t) = \int_{0}^t \eta(s)ds, \\ 
&w(t) = b +  b_0(t), \  b_0(t) = \int_0^te^{u(s)L}\xi(s)ds.
\end{aligned}
\]
Since the function $\eta$ is odd and periodic we obtain that
$a_0(t)$ is also a periodic even function and assumes values between
$-u_0$ and $u_0$, where $u_0 = \int_{0}^{t_0} \eta(s)ds$. 
Note that although $b_0(t)$ is not periodic, its time derivative
is $T$-periodic. 

Fixing the value $a = u(0)$ we obtain an invariant torus.
Indeed consider the flow $\Psi^t$ defined by the $L-F$ system (3),
which is a group extension flow by Proposition 3.1. 
The mapping $\Psi^T$ takes $M\times\{v(0)\}$ into itself  
and it acts there as a right translation. Since $u(t)$ is periodic
the translation is by an element from $G_0$, and hence it preserves 
all the toral leaves. For $a\in \mathbb R$ and $v\in \mathfrak g$ denote 
by $\mathcal L(a,v)$ the toral leaf 
$\mathcal L(a,v) = 
\{(w,u;\xi,\eta)\in SM| w \ mod \ \Gamma_0, u = a, (\xi,\eta)=v\}$. 
Since $\Psi^T$ takes $\mathcal L(a,v(0))$ into itself, and it acts there 
by the  translation by $c_0 = \int_0^{2t_0}e^{u(s)L}\xi(s)ds$,
then the union of the $n$-dimensional tori 
$\Psi^t\mathcal L(a,v(0)), t \in \mathbb R,$ is an $(n+1)$-dimensional 
torus carrying the suspension flow of the toral translation,
i.e., a quasi-periodic flow.

In the case of a linear vector field $F(\xi) = F\xi$,
for some constant matrix $F$, the real analytic first integrals are given by 
the vector valued function 
\[
\Phi(w,u;\xi,\eta) = e^{-uF}\xi.
\]
These first integrals are functionally independent, hence 
their common level sets must be in general unions of 
submanifolds of dimension $n+1$. 
Clearly the toral leaves $\mathcal L(a,v)$ belong to the level sets,
and so do the $n+1$-dimensional tori constructed above.
Hence in general the level sets must be unions of the 
$(n+1)$-dimensional invariant tori. 
\end{proof}

The geodesic flow on $TM$ is both an $L-F$ system and
hamiltonian but it is an exception. The $L-F$ systems are rarely 
hamiltonian.

To establish the Corollary 6.3 we consider more generally 
hamiltonian systems on $T^*M$ with $G_0$ symmetry.
If the hamiltonian function $H: T^*M \to \mathbb R$ is invariant 
under the action of the torus $G_0/\Gamma_0$ then we get immediately
$n$ first integrals in involution. Indeed, this action in the canonical
variables $(w,u;p_w,p_u)$, associated with the variables $(w,u)$
in $M$, amounts to translations in $w$ with the other variables fixed.
It follows that the hamiltonian $H$ does not depend on the variables $w$
and so the $n$ momenta $p_w$ are first integrals of our
hamiltonian system. Together with $H$ itself we have $n+1$ first 
integrals in involution, and if $H$ is functionally independent of $p_w$
then the Arnold-Liouville Theorem is applicable. In particular
we obtain invariant tori on compact level sets of $H$.

Let us identify the cotangent bundle $T^*M$ with the tangent bundle
$TM$ using the left invariant metric on $M$. This identification is 
the Legendre transform associated with the geodesic 
flow. Since the lagrangian $\mathcal L$ for the geodesic flow is equal to
\[
\mathcal L(w,u;\dot w, \dot u) =
\frac{1}{2}\left (\langle e^{-uL}\dot w,e^{-uL}\dot w \rangle +
\dot u^2\right),
\]
we get immediately that 
\[
p_w = \frac{\partial}{\partial \dot w} \mathcal L = e^{-uL^*}e^{-uL}\dot w =
e^{-uL^*}\xi.
\]
For the geodesic flow these are the same first integrals as 
those in Theorem 6.2.

Let us finally consider more special hamiltonians, 
which are invariant under the action 
of the full group $G$. They can be described by functions $\widetilde H:
\mathfrak g \to \mathbb R$, $\widetilde H = \widetilde H(\xi,\eta)$
and 
$H = \widetilde H(e^{uL^*}p_w,p_u)$.
It is instructive to compare the respective 
hamiltonian equations in the $(w,u;\xi,\eta)$ variables 
in $TM$ with the equations of an $L-F$ system. We have
\[
\begin{aligned}
\frac{d\xi}{dt} = \frac{\partial \widetilde H}{\partial \eta} L^*\xi,  
\ \ \ & \frac{d\eta}{dt} = 
-\langle \frac{\partial \widetilde H}{\partial \xi},L^*\xi\rangle \\
\frac{dw}{dt} = e^{uL}\frac{\partial \widetilde H}{\partial \xi},
\ \ \ & \frac{du}{dt} = \frac{\partial \widetilde H}{\partial \eta}.
\end{aligned}
\]

It transpires that also in the hamiltonian case 
if the function $\widetilde H$ has compact level sets
then the Euler equation in $\mathfrak g$ has an open
and dense set of periodic trajectories.

\section{Compact configuration spaces}
In this section we consider the additional properties of an $L-F$ 
system on the compact phase space, namely the unit sphere bundle $SN$.
where $\Gamma$ is a cocompact lattice in $G$ and 
$N = \Gamma\backslash G$, as discussed in Section 1.

Let us note first that in this  case the group must be unimodular, 
and the operator $L$ has zero trace. Hence either $L$ has  
eigenvalues with both positive and negative real parts,
or all of its eigenvalues are on the imaginary axis.

The integrability and semi-integrability are passed from 
the non-compact phase space $SM$ to the compact phase space $SN$ 
without further assumptions. 
What is new is the appearance of hyperbolic behavior made possible
by the recurrence in the compact phase space. 
For any quadratic $L-F$ system the submanifold 
$\mathcal A \subset SM$ given by the equations $\xi =0, \eta =1$
is diffeomorphic to $M$ and it carries trajectories escaping 
to infinity both in the future and  in the past. 
The projection of $\mathcal A$ to the compact phase space $SN$ is 
diffeomorphic to $N$ and the $L-F$ flow is the suspension 
of the toral automorphism $e^L$. 
If the toral automorphism  
has no eigenvalues on the unit circle then it is an Anosov 
diffeomorphism, and its suspension is an Anosov flow. 
If it has only some eigenvalues outside the unit circle
then we get partially hyperbolic flows as suspensions,
\cite{K-H}. Such flows 
in the compact phase space $SN$ while integrable
have positive topological entropy. The existence of such flows 
was the discovery of Bolsinov and Taimanov, \cite{B-T}.

More generally let us assume that the automorphism  $e^L$ has eigenvalues 
outside of the unit circle. Then for an arbitrary left-invariant 
second order equations on $SN$ (not necessarily an $L-F$ system)
we have the following
\begin{proposition}
If the variable $\eta$ has positive lower (upper) time average on some 
trajectory then this trajectory has negative and positive lower 
(upper) Lyapunov exponents. The dimension of the respective stable 
and unstable subspaces is equal to the dimension of the stable
and unstable subspaces of the automorphism $e^L$. 
\end{proposition}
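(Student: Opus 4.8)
The plan is to linearize the flow along the given trajectory and to isolate the invariant subbundle formed by the directions tangent to the toral leaves. Writing the linearized flow in the coordinates $(\delta w,\delta u,\delta\xi,\delta\eta)$, I note first that every second order equation satisfies $\dot w=e^{uL}\xi$ and $\dot u=\eta$, so the $w$-equation linearizes to $\dot{\delta w}=e^{uL}(L\xi\,\delta u+\delta\xi)$, in which $\delta w$ itself does not appear, while $(\delta u,\delta\xi,\delta\eta)$ evolve among themselves. Hence the $n$-dimensional bundle $E_w=\{\delta u=\delta\xi=\delta\eta=0\}$ of toral-leaf directions is invariant. I would record this invariance intrinsically from the group–extension form of Proposition 3.1 together with the $G_0$-equivariance of the flow, so that the argument applies to an arbitrary left-invariant system and not only to $L-F$ systems.

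On $E_w$ the dynamics becomes transparent in the left-invariant framing. Set $\zeta=e^{-uL}\delta w$, the representative of the toral tangent vector whose norm is its Riemannian length (on the compact $SN$ all continuous norms give the same exponents, so this choice is harmless). A direct differentiation gives $\dot\zeta=-\eta L\zeta$, whence $\zeta(t)=e^{-(u(t)-u(0))L}\zeta(0)$ with $u(t)-u(0)=\int_0^t\eta\,ds$. On the eigenspace of $L$ with eigenvalue $\lambda$ one reads
\[
\frac{1}{t}\log\|\zeta(t)\| = -\,\mathrm{Re}\,\lambda\cdot\frac{1}{t}\int_0^t\eta(s)\,ds .
\]
I would then feed in the hypothesis: a positive lower (resp. upper) time average of $\eta$ forces, after tracking the corresponding $\liminf$/$\limsup$ through this identity, a negative exponent on the eigenspaces with $\mathrm{Re}\,\lambda>0$ and a positive exponent on those with $\mathrm{Re}\,\lambda<0$. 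In the lower-average case both the upper and lower exponents acquire a definite sign, giving genuine hyperbolicity on $E_w$. It is worth stressing that the sign is produced entirely by the metric factor $e^{-uL}$: the coordinate toral vector is carried to itself by the flow, but its Riemannian length is rescaled by $e^{-u(t)L}$. This is precisely the scaling seen by the monodromy of the suspension on the section $\{u=0\}$, so the dimensions of the contracting and expanding subspaces of $E_w$ coincide with the dimensions of the stable and unstable subspaces of the automorphism $e^L$ of Section 7.

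It remains to see that these toral subspaces are genuine stable and unstable subspaces of the whole linearized flow and that the complementary directions do not alter the rates. The coupling enters only through the source $L\xi\,\delta u+\delta\xi$, where $\xi$ is bounded on the unit sphere, $\delta u$ grows at most linearly (it is the integral of $\delta\eta$), and $\delta\xi$ is governed by the linearized Euler flow. Since $E_w$ is invariant and the system is block upper triangular, I would conclude by the variation-of-constants formula that the Lyapunov spectrum is the union of the spectrum on $E_w$, computed above, and that of the quotient dynamics (the Euler flow together with the $\delta u$ drift); in particular the subexponential source leaves the rates on $E_w$ untouched, and the contracting/expanding eigenspaces of $E_w$ furnish the asserted stable and unstable subspaces of the correct dimensions.

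The main obstacle is exactly this interface between the toral block and its complement. Because $\eta$ is assumed only to have a positive lower or upper average, the trajectory need not be Lyapunov regular, so I cannot invoke the Oseledets splitting; the stable and unstable subspaces must be produced by hand, as the sets of initial toral vectors whose forward (resp. backward) Riemannian length decays exponentially, and the $\liminf$/$\limsup$ bookkeeping of $\frac{1}{t}\int_0^t\eta\,ds$ must be carried carefully through the inequalities, since the precise pairing of ``lower'' and ``upper'' depends on the sign of $\mathrm{Re}\,\lambda$. The second delicate point is to confirm that the quotient (Euler) block contributes no further directions to the counted stable and unstable subspaces; this is immediate in the quasi-periodic regime motivating the statement, where that block carries only zero exponents, and in general is handled by the triangular union-of-spectra structure just described, the toral block being the only source of the $e^{\pm\int\eta\,L}$-type growth. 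The toral computation itself is elementary; all of the care lies in these two points.
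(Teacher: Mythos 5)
Your proposal is correct in its core and takes essentially the same route as the paper's proof: the paper likewise isolates the toral-leaf directions (via the group-extension form of Proposition 3.1, on which $D\Psi^t$ is the identity in the $w$-coordinates) and reads the exponents off the left-invariant metric factor $\|dw\|^2=\left(e^{-(u+u_0)L}dw\right)^2$, whose exponential growth or decay is driven by $u_0(t)=\int_0^t\eta_0\,ds$ --- exactly your computation with $\zeta=e^{-uL}\delta w$. The only divergence is your final paragraph: the paper never attempts to identify the full Lyapunov spectrum of the linearization; it merely exhibits invariant toral subspaces of the stated dimensions carrying exponents of definite sign, and that is how the dimension claim should be read. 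Your stronger assertion that the quotient (Euler) block contributes no further stable or unstable directions is in fact false in general --- for instance on the invariant submanifold $\mathcal A$ of Section 7, where $\eta\equiv 1$, the Euler block of a quadratic system with hyperbolic $F$ is itself hyperbolic and does contribute contracting and expanding directions --- so it is fortunate that this part of your argument is not needed for the proposition as the paper intends it.
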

\begin{proof}
The flow $\Psi^t$ has the structure of a group extension flow over 
the Euler flow as described in Proposition 3.1. 
We fix a trajectory $v(t)= (\xi_0(t),\eta_0(t))$ of the
Euler flow. There is a unique parametrized curve $(w_0(t),u_0(t))\in G,
w_0(0)= 0, u_0(0) = 0$, such that for every $(w,u) \in G$
\[
\Psi^t(w,u;v(0)) = \left(w+e^{uL}w_0(t),u + u_0(t);v(t)\right).
\]
Hence the flow takes the $n$- dimensional toral leaf  
$\mathcal L(u,v(0))$
onto the toral leaf 
$\mathcal L(u+u_0,v(t))$. We have the coordinates $w$ for both 
toral leaves. In these coordinates the restriction of the flow
is defined by $w \mapsto w+e^{uL}w_0(t)$.
Differentiating this mapping with respect to $w$ we obtain 
that $D\Psi^t$ restricted to the $n$-dimensional tangent subspace of
the toral leaf is given by the identity operator in the coordinates $w$. 
However we measure the tangent vectors using 
the left-invariant metric and that means  
\[
||dw||^2 = \left(e^{-(u+u_0)L}dw\right)^2.
\]
Hence we get the exponential growth (decay) as long as
$u_0$ has linear growth (decay). Since 
$\frac{du_0}{dt} = \eta_0$, 
the assumption of the positive time average 
of $\eta_0$ leads to the exponential growth (decay) of $||dw||^2$ 
in the unstable (stable) subspace of the operator $L$. 
\end{proof}
We have not chosen any invariant measure so the Lyapunov exponents 
need to be understood as the upper (or lower) limits. 
It follows from the proof that assuming negative lower (upper)
time average we will arrive at the same conclusion,
with the reversal of the number of positive and negative 
Lyapunov exponents.

The phenomenon described in Proposition 7.1 appears explicitly
in the work of Butler and Paternain on special magnetic flows
on $SOL$, \cite{Bu-P}.

Let us consider first integrals of a quadratic $L-F$ flow $\Psi^t$  
on the compact phase space $SN$. The real-analytic first integrals
of Theorem 6.1 can be considered as real analytic multi-valued first 
integrals.  We will establish that for a residual 
subset of integrable quadratic $L-F$ systems on $SN$
there are no single valued real-analytic first
integrals constant on the toral leaves. 
If an integrable system is non-degenerate (i.e., it has 
a rich family of frequencies for the quasi-periodic motions on 
the tori) then by necessity any continuous first integral must be
constant on the toral leaves. Since we have not established
the non-degeneracy we need to include the constancy into the assumptions.

Let us recall that by Theorem 6.1 quadratic $L-F$ flows on $SN$ 
for which the matrix $F$ has eigenvalues with 
positive and negative real parts are integrable, and they form 
an open subset in the space of all quadratic $L-F$ flows. 
The complement of this subset has an interior filled with flows 
asymptotic to the suspension of the toral automorphism $e^L$.   
\begin{proposition}
For a residual subset of integrable quadratic $L-F$ flows 
on $SN$  there are no single valued real-analytic 
first integrals constant on the 
$n$-dimensional toral leaves, i.e., functions of $(u;\xi,\eta)$.
\end{proposition}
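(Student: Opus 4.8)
The plan is to convert the existence of a single-valued integral into the existence of a nonconstant real-analytic function invariant under the linear monodromy $e^{-F}$, and then to make the absence of such a function a residual spectral condition on $F$. First I would unwind the hypotheses. A first integral constant on the toral leaves is a function $\phi=\phi(u;\xi,\eta)$, and single-valuedness on $SN=\Gamma\backslash G$ means invariance under the generator $A=e^L$ of the deck group, which acts by $(w,u;\xi,\eta)\mapsto(e^Lw,u+1;\xi,\eta)$; since $\phi$ does not see $w$, this is the periodicity $\phi(u+1;\xi,\eta)=\phi(u;\xi,\eta)$. On the open dense region $R$ swept out by the escaping trajectories of Theorem 6.1, the map $\Phi=e^{-uF}\xi$ of Theorem 6.2 is a complete system of first integrals of the reduced flow $\dot\xi=\eta F\xi,\ \dot\eta=-\langle F\xi,\xi\rangle,\ \dot u=\eta$, its fibres being the individual (connected) reduced trajectories; hence $\phi=G\circ\Phi$ on $R$ for some real-analytic $G$ on $\Omega=\Phi(R)$. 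Because $e^{-(u+1)F}\xi=e^{-F}\Phi$, the periodicity becomes the functional equation
\[
G(e^{-F}\zeta)=G(\zeta),\qquad \zeta\in\Omega.
\]

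Next I would push $G$ down to the origin. The region $R$ accumulates on the pole $\mathcal A=\{\xi=0\}$: escaping trajectories with $\Phi=\varepsilon(v_++v_-)$, where $v_\pm$ span a stable and an unstable direction of $F$, pass near $\mathcal A$ and satisfy $\Phi\to0$ as $\varepsilon\to0$. Since $\phi$ is real-analytic on all of $SN$, and in particular near $\mathcal A$, the formula $G(\zeta)=\phi\big(0;\zeta,\sqrt{1-|\zeta|^2}\big)$ furnishes a real-analytic extension of $G$ to a full neighborhood of $\zeta=0$, and by analytic continuation the functional equation persists there. Expanding the $e^{-F}$-invariant $G$ in a Taylor series at $0$ in eigencoordinates, a monomial $\zeta^{a}$ can occur only when $\prod_j e^{-a_j\lambda_j}=1$, i.e. $\sum_j a_j\lambda_j\in 2\pi i\mathbb Z$, where $\lambda_j$ are the eigenvalues of $F$. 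I would call $F$ admissible when no such relation holds for $a\in\mathbb Z_{\ge0}^{n}\setminus\{0\}$; as membership in $2\pi i\mathbb Z$ forces $\sum_j a_j\,\mathrm{Re}\,\lambda_j=0$, it suffices that the real parts carry no vanishing nonnegative integer relation. For admissible $F$ the Taylor series of $G$ is constant, hence $\phi$ is constant, so no nontrivial single-valued real-analytic integral survives.

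To see that the admissible set is residual, note that for each pair $(a,m)$ the relation $\sum_j a_j\lambda_j=2\pi i m$ is, after eliminating the branching of the $\lambda_j$ through the coefficients of the characteristic polynomial, a proper real-analytic equation on the matrix $F$, and hence defines a closed nowhere dense subset of the open space of admissible matrices; the countable union over $(a,m)$ is meagre, and its complement is residual.

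The delicate step is the representation $\phi=G\circ\Phi$ together with its real-analytic extension across the pole $\mathcal A$. One must control $\Phi$ on the degenerate locus (the equator $\{\eta=0\}$ and the escaping/non-escaping interface), confirm that the fibres of $\Phi$ over $R$ are connected so that $G$ is genuinely well defined, and check that the extension produced near $\mathcal A$ agrees with $G$ on $\Omega$ so that the Taylor analysis at $0$ is legitimate. I expect this analytic bookkeeping, rather than the resonance computation or the Baire-category argument, to be the crux; in particular the extension to a neighborhood of $\zeta=0$ is essential, since on a mere cone accumulating at the origin an $e^{-F}$-invariant real-analytic function need not be constant.
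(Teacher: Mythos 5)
Your skeleton --- single-valuedness on $SN$ forces $1$-periodicity in $u$, which must yield a real-analytic germ at $\zeta=0$ invariant under $e^{-F}$, which is then killed by a Poincar\'e-type resonance condition holding for a residual set of $F$ --- is exactly the paper's, and your endgame is sound (your treatment of the $2\pi i\mathbb Z$ ambiguity and the Baire argument is, if anything, more explicit than the paper's). The genuine gap is the middle step, the one you yourself call the crux. Your representation $\phi=G\circ\Phi$ on the escaping region needs the fibres of $\Phi=e^{-uF}\xi$ to be connected, and this is false in general: the fibre over $c$ is $\{(u;e^{uF}c,\pm\sqrt{1-|e^{uF}c|^2}):\ |e^{uF}c|\le 1\}$, so its components correspond to the components of $\{u:\ |e^{uF}c|\le 1\}$, and the curve $u\mapsto e^{uF}c$ can leave the unit ball and re-enter it (e.g.\ for $n\ge 3$, a slowly decaying, fast, non-orthogonal spiral coming from a complex eigenvalue pair superposed on a real expanding direction makes $|e^{uF}c|$ oscillate across the value $1$ many times). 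Then one fibre of $\Phi$ contains several distinct closed reduced orbits, and a first integral has no reason to take the same value on all of them; so $G$ is not well defined on $\Omega$, the functional equation on $\Omega$ is not yet meaningful, and the analytic continuation has nothing to continue from. Repairing this (shrinking to trajectories meeting the ball in a single interval, proving analyticity of $G$ there, and exhibiting an open overlap with the germ at the origin) is precisely the unfinished work, not routine bookkeeping.

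The paper's proof shows the whole detour is unnecessary: escaping trajectories, which drive integrability in Theorems 5.2--6.2, play no role in this proposition. Since $\widetilde R(u;\xi)=\phi\bigl(u;\xi,\sqrt{1-|\xi|^2}\bigr)$ is real-analytic and orbit-constant on all of $\mathbb R\times B$, the time change $ds/dt=\eta$ turns the reduced equations into $du/ds=1$, $d\xi/ds=F\xi$ there, so the identity $\partial_u\widetilde R+\partial_\xi\widetilde R\, F\xi\equiv 0$ holds on $\mathbb R\times B$. Consequently $P(u;\zeta):=\widetilde R(u;e^{uF}\zeta)$ satisfies $\partial_u P\equiv 0$, i.e.\ $\widetilde R(u;\xi)=P(e^{-uF}\xi)$ with $P(\zeta)=\widetilde R(0;\zeta)$, valid whenever $e^{sF}\zeta$ stays in $B$ for $s$ between $0$ and $u$ --- in particular for all $u\in[0,1]$ and all $\zeta$ in a full neighborhood of the origin. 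Periodicity in $u$ then gives $P(e^{-F}\zeta)=P(\zeta)$ on an open neighborhood of $0$, and your resonance and residuality analysis takes over verbatim. In dynamical terms: for small $\zeta$ the reduced orbit through $\bigl(0;\zeta,\sqrt{1-|\zeta|^2}\bigr)$ reaches $\bigl(1;e^{F}\zeta,\sqrt{1-|e^{F}\zeta|^2}\bigr)$, so orbit-invariance plus deck-invariance give the functional equation outright --- no escaping region, no $G$, no identity-theorem step across an interface.
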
 
\begin{proof}
We consider only diagonalizable operators $F$,  with 
all different eigenvalues
$(\lambda_1,\lambda_2,\dots,\lambda_n)$,
at least some of them with positive and 
some of them with negative real parts. 
This gives us an open and dense subset of integrable quadratic 
$L-F$ systems. If there is a real-analytic first integral $R$ 
for such a system on $SN$ then it can be lifted to the first 
integral of the system on $SM$, which we denote also by $R$. 
Now $R$ can be considered as a function of $(u;\xi,\eta)$. 
We restrict our attention to the unit ball $B$ in $\mathfrak g_0$ 
(the ``$\xi$-space''), and we put $\eta = \sqrt{1-\xi^2}$.
This gives us a real-analytic function 

$\widetilde R(u;\xi), (u;\xi)\in \mathbb R\times B,$ which is 
also a first integral of the system.

We further consider the time change 
$\frac{ds}{dt} =\eta$ in our domain $\mathbb R\times B$ 
The equations of the system in the variables $(u;\xi)$ are 
\begin{equation}
\frac{du}{ds} = 1, \ \frac{d\xi}{ds} = F\xi
\end{equation}
The function $\widetilde R$ is a first integral of 
(6) so that  its time derivative is identically zero
\[
\frac{\partial \widetilde R}{\partial u}  + 
\frac{\partial \widetilde R}{\partial \xi}F\xi  \equiv 0. 
\]
It follows that for the real analytic function 
$P(u;\zeta) =  \widetilde R(u;e^{uF}\zeta)$ 
defined in the vicinity of the origin we have 
\[
\frac{\partial P}{\partial u}  = 
\frac{\partial \widetilde R}{\partial u}  + 
\frac{\partial \widetilde R}{\partial \xi}F e^{uF}\zeta
\equiv 0. 
\]
Hence $P$ is the function of $\zeta$ alone defined in the vicinity
of the origin in $\mathbb R^n$ such that   
$\widetilde R(u;\xi) = P(e^{-uF}\xi)$. 
Now we use the fact that the function $\widetilde R(u;\xi) = P(e^{-uF}\xi)$
must be actually a periodic function of $u$ with the period $1$, since 
$\widetilde R$ is lifted from $SN$. Hence 
\begin{equation}
P(e^{-F}\xi) = P(\xi), \ \ \text{for all} \ \ \xi \in \mathbb R^n.
\end{equation}
It goes back to Poincare that such a relation is 
impossible if the eigenvalues of $F$ are non-resonant. More precisely, if  
there is no integer vector $(k_1,k_2,\dots,k_n)$, with non-negative 
entries, such that 

$k_1\lambda_1 +\dots +k_n\lambda_n =0$ then the relation (7)
can hold only for a constant function.

The family of $L-F$ systems with non-resonant
eigenvalues form a residual subset of the open set of 
integrable quadratic $L-F$ systems introduced at the 
beginning of the proof.  
\end{proof}

\section{Invariant measures}

In this section we discuss smooth invariant measures for the 
$L-F$ systems, for the non-compact $SM$ and compact $SN$ phase spaces. 
First of all we consider the Lebesgue measure $dw\ du$ in $G$,
which is the right-invariant Haar measure. It is also invariant
under the left translations by elements from $\Gamma_0$. So it projects
into a $\sigma$-finite measure $\mu$ on $M = \Gamma_0\backslash G$. 
The product of $\mu$ by the standard Lebesgue measure 
in $\mathbb S^n \subset \mathfrak g$ (or $\mathfrak g$) 
will be referred to as the Lebesgue measure in $SM = M\times \mathbb S^n$
(or $TM$) and denoted by $\nu$.
Once the Lebesgue measure $\nu$ is chosen we call 
a function $\rho\geq 0$ an {\it invariant density} of a dynamical system 
if the measure $\rho\nu$ is preserved by the dynamical system.
\begin{proposition}
If the vector field $F$ has constant divergence in the unit ball 
of $\mathfrak g_0$ then $\rho = exp(-u\ div \ F)$ is an invariant density 
for  the $L-F$ system on $SM$.
\end{proposition}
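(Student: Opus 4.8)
The plan is to verify the Liouville (stationarity) equation for the measure $\rho\,\nu$. Writing $X$ for the vector field on $SM$ that generates the $L$--$F$ flow $\Psi^t$, i.e. the right-hand side of the equations (2), the measure $\rho\,\nu$ is $\Psi^t$-invariant if and only if $\operatorname{div}_\nu(\rho X)=0$, which upon expanding (and using $\rho>0$) becomes
\[
X(\log\rho)+\operatorname{div}_\nu X=0 .
\]
Thus everything reduces to computing $\operatorname{div}_\nu X$ and matching it against $X(\log\rho)$ for the proposed density $\rho=\exp(-u\operatorname{div}F)$.

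First I would compute $\operatorname{div}_\nu X$. Since $\nu=dw\,du\,d\sigma$ is the product of the Lebesgue measures in $w$ and $u$ with the round area form $d\sigma$ on $\mathbb S^n$, the divergence splits into the three coordinate blocks. The $w$-component of $X$ is $e^{uL}\xi$, which does not depend on $w$, so it contributes nothing; the $u$-component is $\eta$, which does not depend on $u$, so it too contributes nothing. Hence the only contribution comes from the spherical factor, namely the divergence on $\mathbb S^n$ of the Euler field $\dot\xi=\eta F(\xi),\ \dot\eta=-\langle F(\xi),\xi\rangle$.

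The heart of the argument is this spherical divergence, and I expect it to be the only real obstacle. The clean way to handle it is to regard the Euler field as the restriction to $\mathbb S^n$ of the field on $\mathfrak g=\mathbb R^{n+1}$ given by the same formulas (defined wherever $|\xi|\le 1$, hence on a one-sided neighborhood of $\mathbb S^n$). This ambient field is everywhere orthogonal to the radial direction, since
\[
\big\langle (\eta F(\xi),-\langle F(\xi),\xi\rangle),(\xi,\eta)\big\rangle
=\eta\langle F(\xi),\xi\rangle-\eta\langle F(\xi),\xi\rangle=0,
\]
so it is tangent to every sphere centred at the origin. For a field tangent to the concentric spheres the Euclidean divergence, restricted to $\mathbb S^n$, coincides with the intrinsic divergence on $\mathbb S^n$ taken with respect to $d\sigma$; this follows from the decomposition $\Omega_{\mathrm{Eucl}}=r^n\,dr\wedge d\sigma$ of the Euclidean volume form, because the radial component vanishes and the factor $r^n$ is constant along each sphere. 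The ambient Euclidean divergence is immediate:
\[
\sum_{i=1}^n\frac{\partial}{\partial\xi_i}\big(\eta F_i(\xi)\big)
+\frac{\partial}{\partial\eta}\big(-\langle F(\xi),\xi\rangle\big)
=\eta\operatorname{div}F(\xi),
\]
the second term vanishing because $\langle F(\xi),\xi\rangle$ is independent of $\eta$. Therefore $\operatorname{div}_\nu X=\eta\operatorname{div}F(\xi)$.

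Finally I would close the balance. With $\operatorname{div}F\equiv c$ constant and $\log\rho=-uc$, the derivative along the flow is $X(\log\rho)=-c\,X(u)=-c\,\dot u=-c\,\eta$, which is exactly $-\operatorname{div}_\nu X$. Hence $\operatorname{div}_\nu(\rho X)=0$ and $\rho\,\nu$ is invariant. It is worth noting that the constancy of $\operatorname{div}F$ is precisely what is needed: a density depending on $u$ alone can absorb the factor $\eta\operatorname{div}F(\xi)$ only when $\operatorname{div}F$ is independent of $\xi$.
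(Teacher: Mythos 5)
Your proof is correct and is essentially the paper's argument: both hinge on the two facts that the flat divergence of the $L$--$F$ field equals $\eta\,\mathrm{div}\,F(\xi)$ (the $w$- and $u$-components contributing nothing) and that $\frac{d}{dt}\log\rho=-\eta\,\mathrm{div}\,F$ for $\rho=\exp(-u\,\mathrm{div}\,F)$. The only difference is packaging: the paper checks invariance of $\rho\nu$ upstairs in $TM$ and appeals to the preservation of the sphere bundles to descend to $SM$, while you carry out the same ambient computation intrinsically on $SM$ via the decomposition $\Omega_{\mathrm{Eucl}}=r^n\,dr\wedge d\sigma$ --- which is exactly the justification of the reduction step that the paper leaves implicit.
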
 
\begin{proof}
Since the $L-F$ system defined in $TM = M\times \mathfrak g$ 
preserves all the sphere bundles it suffices to check the invariance
of the measure in $TM$. It can be easily seen that 
the divergence of the $L-F$ system in $TM$ is equal to 
$\eta \ div \ F$. Since
\[
\frac{d}{dt} \rho = -\eta \ div \ F,
\]
the claim is  proven.
\end{proof}

In the case of a compact  phase space $SN$
the group $G$ must be unimodular, i.e., $tr \ L = 0$. 
For unimodular Lie groups the left Haar measure is equal 
to the right Haar measure, and the Lebesgue measures $\mu$ 
and $\nu$ project as finite measures to $N$ and $SN$, respectively. 
We will denote the resulting measures again as $\mu$ and $\nu$.

It follows from the above Proposition that if $div \ F = 0$ then 
the $L-F$ system preserves the Lebesgue measure $\nu$ in $SN$.
Moreover the Euler flow of the $L-F$ system preserves the Lebesgue
measure on $\mathbb S^n \subset \mathfrak g$.

Conversely, if an $L-F$ system has a (real-analytic)  
invariant density $\rho$ in $SN$ then the Euler flow 
in $\mathbb S^n\subset \mathfrak g$ 
has a (real-analytic) invariant density.
Indeed, we obtain the density of the projected measure by 
integrating out the $w,u$ coordinates. It preserves the smoothness,
and the real-analyticity of the density.

For the quadratic $L-F$ systems the Euler flow cannot have
a smooth invariant density which is  positive at the fixed point
$\xi =0$, unless $div \ F = 0$. However there may be smooth,
or even real-analytic, invariant densities which vanish at the 
fixed point, even if $div \ F \neq 0$.   

\begin{proposition}
For a residual subset in the space of quadratic 
$L-F$ systems in $SN$ there are no real-analytic invariant densities.

For the open family of integrable quadratic 
$L-F$ systems in $SN$ with the matrix $F$ having  all 
different real eigenvalues, there are $C^\infty$ invariant densities. Moreover 
there is a dense subset in this family with real-analytic invariant densities.
\end{proposition}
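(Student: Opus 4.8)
The plan is to reduce the statement, as in the proof of Proposition 8.2, from the existence of an invariant density on $SN$ to the existence of an invariant density for the Euler flow on the unit sphere $\mathbb S^n \subset \mathfrak g$. Since any real-analytic invariant density $\rho$ on $SN$ projects (by integrating out the $w,u$ coordinates) to a real-analytic invariant density for the Euler flow, it suffices to work entirely with the reduced system. Using the time change $\frac{ds}{dt} = \eta$ and the chart $\eta = \sqrt{1-\xi^2}$ on the upper half sphere, the Euler flow becomes the linear flow $\dot\xi = F\xi$ on the unit ball $B \subset \mathfrak g_0$, so the analysis localizes to the behavior of $e^{sF}$ near the fixed point $\xi = 0$.

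For the first (residual) statement, I would argue that an invariant density for the linear flow $\dot\xi = F\xi$ must satisfy a transport equation $\frac{d}{ds}\log\rho = -\,\mathrm{div}\,F = -\,\mathrm{tr}\,F$ along orbits, and combine this with the requirement that $\rho$ be real-analytic and single-valued on the compactified phase space. The key mechanism is the one already exploited in Proposition 7.2: when the eigenvalues $(\lambda_1,\dots,\lambda_n)$ of $F$ are non-resonant, the only real-analytic functions compatible with the return map $e^{-F}$ are essentially forced to be trivial, so that consistency of an invariant density across the equator $\{\eta = 0\}$ — where the two semi-spheres are glued by the $J$-reversibility — cannot be achieved unless $\mathrm{tr}\,F = 0$. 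I would show that the generic (non-resonant, and with $\mathrm{tr}\,F \neq 0$) quadratic $L$--$F$ systems form a residual set, and that for these the transport equation has no global real-analytic solution, because integrating $-\,\mathrm{tr}\,F\cdot s$ around the homoclinic-type excursions produces a multiplicative factor incompatible with single-valuedness. The main obstacle here is the careful bookkeeping at the equator: one must verify that the gluing imposed by reversibility genuinely obstructs analyticity, rather than merely smoothness.

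For the second statement, suppose $F$ has all distinct real eigenvalues with both signs present (the integrable case of Theorem 6.2). Here I would construct the invariant density explicitly. Diagonalizing $F$ gives coordinates in which $e^{sF}$ acts by independent real exponentials, and one can seek $\rho$ as a function adapted to the first integrals $\Phi = e^{-uF}\xi$ of Theorem 6.2. Because the eigenvalues are real, the transport equation $\frac{d}{ds}\log\rho = -\,\mathrm{tr}\,F$ can be solved by a density that decays appropriately toward the fixed point $\xi = 0$; the distinctness and reality of the eigenvalues let one build a $C^\infty$ solution that vanishes to infinite order at $\xi = 0$, so that no positivity obstruction at the fixed point arises (as flagged in the discussion preceding the proposition). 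I would then exhibit a dense subfamily — for instance, those $F$ whose eigenvalues satisfy suitable rationality or integrality relations among the ratios $\lambda_i/\lambda_j$ — for which the infinite-order-vanishing construction can be carried out with a real-analytic rather than merely smooth density.

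The step I expect to be genuinely delicate is establishing the dense real-analytic subfamily in the second statement: smoothness follows from a routine flat-at-the-origin construction, but upgrading to real-analyticity requires that the formal power series for $\rho$ at $\xi = 0$ actually converge, which hinges on arithmetic (small-divisor-type) conditions on the eigenvalue ratios. I would handle this by identifying explicit algebraic relations on $(\lambda_1,\dots,\lambda_n)$ under which the transport equation admits a monomial (or finite-product) analytic solution, and then verifying that such relations are satisfiable on a dense subset of the parameter space of diagonal real $F$ with mixed signs. The residuality in the first statement, by contrast, follows from standard genericity of the non-resonance condition together with $\mathrm{tr}\,F \neq 0$, both of which cut out dense $G_\delta$ sets.
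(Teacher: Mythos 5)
The genuine gap is in your first (residual) statement. You locate the obstruction at the equator $\{\eta=0\}$ and claim that ``integrating $-\mathrm{tr}\,F\cdot s$ around the homoclinic-type excursions produces a multiplicative factor incompatible with single-valuedness.'' This mechanism cannot work, and it fails precisely because of the $J$-reversibility you invoke: since $J$ preserves the volume and conjugates the flow to its time reversal, $(\mathrm{div}\,X)\circ J=-\mathrm{div}\,X$, so along any $J$-symmetric periodic orbit $p(t)$ (which satisfies $Jp(t)=p(-t)$ by Theorem 5.1) the integral of the divergence over a full period vanishes identically, and the holonomy factor you want to exploit is always equal to $1$. Concretely, in the ball picture the closed Euler orbit traverses the escaping integral curve forward in the upper hemisphere and the \emph{same} curve backward in the lower hemisphere, so the accumulated $-\mathrm{tr}\,F\cdot s$ cancels exactly. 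Moreover, an argument whose conclusion is ``no real-analytic density unless $\mathrm{tr}\,F=0$'' would prove too much: the dense subfamily in the second statement consists of systems with $\mathrm{tr}\,F\neq 0$ that \emph{do} carry real-analytic densities, contradicting such a conclusion. The paper's obstruction is instead local at the interior fixed point $\xi=0$ (which your first paragraph correctly identifies, but your second paragraph abandons): writing $\mathrm{div}(\rho F)=0$ in diagonalizing coordinates gives $\sum_i\lambda_i\xi_i\,\partial_{\xi_i}\rho=-\mathrm{tr}\,F\,\rho$, and this operator acts diagonally on Taylor monomials, a monomial $\prod_i\xi_i^{r_i}$ solving it if and only if $\sum_i\lambda_i(r_i+1)=0$. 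Hence a nonzero real-analytic germ at the origin exists if and only if this resonance relation has a solution in non-negative integers $r_i$, and the residual set is exactly where it has none; non-resonance in the sense of Proposition 7.2 suffices. Your first part is repairable by replacing the equator argument with this Taylor-coefficient argument at the origin (equivalently, by deriving from $u$-periodicity the twisted relation $P(e^{-F}\zeta)=e^{\mathrm{tr}\,F}P(\zeta)$ and analyzing it at $\zeta=0$ as in Proposition 7.2).

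Your reduction to the Euler flow and your second statement essentially follow the paper. The paper also integrates out $(w,u)$, and on a dense resonant subfamily takes the pure monomial $\rho(\xi)=\prod_i\xi_i^{r_i}$ with \emph{even} natural $r_i$ satisfying $\sum_i\lambda_i(r_i+1)=0$; since an exact monomial solves the equation, no power series must be summed and your small-divisor concern is moot. For the $C^\infty$ density, however, ``routine flat-at-the-origin construction'' glosses over the real point: the natural solution $\widetilde\rho=\prod_i|\xi_i|^{r_i}$ with positive real exponents satisfying the resonance relation fails to be smooth on the whole union of coordinate hyperplanes (the stable/unstable manifolds), not just at the origin, and the paper repairs this by multiplying with the flat first integral $\exp\bigl(-f(\xi)^{-1}\bigr)$, where $f(\xi)=\prod_i|\xi_i|^{r_i+1}$ is invariant under the flow; some such explicit device is needed for your sketch to close.
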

\begin{proof}
It follows from the above considerations that 
if an $L-F$ system in $SN$ has a real analytic invariant density 
then the field $F$ has a real analytic invariant
density $\rho = \rho(\xi)$ in the unit ball in $\mathfrak g_0$.
Hence we have 
\[
div \left(\rho F\right) = 0.
\]
For a diagonalizable matrix $F$ we can analyze the last equation in the
coordinates in which $F$ is diagonal. This may require a linear complex 
change of coordinates, but that is not a problem in calculations involving
Taylor expansions. Assuming that $F$ is already diagonal with 
eigenvalues $\lambda_1,\lambda_2,\dots, \lambda_n$
we get 
\begin{equation} 
\sum_{i=1}^n\lambda_i\xi_i\frac{\partial \rho}{\partial \xi_i} =
-  tr F \rho
\end{equation}
This relation can hold for a function real-analytic 
in the neighborhood of the origin $\xi =0$ if and only if there 
are monomials which satisfy it. For $\rho(\xi) = \prod_{i=1}^n\xi^{r_i}$
we get substituting into (8)
\begin{equation}
\sum_{i=1}^n\lambda_i(r_i+1) = 0.
\end{equation}
The set of matrices $F$ for which (9) does not hold 
for any vector $(r_1,r_2,\dots,r_n)$ with natural entries
is a residual subset in the space of all matrices.
The first part of the Theorem is proven.

To prove the second part consider the set of matrices 
with all different real eigenvalues, at least one positive 
and one negative. It is an open set of matrices. 
Its subset where the eigenvalues satisfy 
(9) for some vector $(r_1,r_2,\dots,r_n)$ with natural even entries
is dense.  For such systems the function $\rho(\xi) =
\prod_{i=1}^n\xi^{r_i} \geq 0$ is an invariant density of the 
Euler flow, and also the full $L-F$ system in $SN$.

To get a $C^\infty$ invariant density we choose a vector
$(r_1,r_2,\dots,r_n)$ with positive entries satisfying (9) 
and consider the function $\widetilde \rho(\xi) =
\prod_{i=1}^n\left|\xi\right|^{r_i} \geq 0$. The function satisfies 
(8) in the open dense subset where it is positive, but
it is not in general a  smooth function.
We consider the first integral of the Euler 
flow $f(\xi) = \prod_{i=1}^n\left|\xi\right|^{r_i+1}$ and
we get a $C^\infty$ invariant density 
$\rho(\xi) = exp(-f(\xi)^{-1})\widetilde \rho(\xi)$.
\end{proof} 
The $C^\infty$ invariant densities in the proof are similar to the 
$C^\infty$ first integrals of Butler, \cite{Bu1}.

\section{Coexistence of integrablility and positive metric entropy}

The results of Sections 6 and 7  may be used to find real-analytic
volume preserving $L-F$ flows with both quasi-periodic motions in
an open subset and positive metric entropy. 
Such examples were constructed by Butler
in the $C^\infty$ class, \cite{Bu2},\cite{Bu3}. The recent survey 
of Chen, Hu and Pesin, \cite{C-H-P} describes other kinds of 
coexistence phenomenae.

Let $F$ have purely imaginary all different eigenvalues,
but $F^* \neq -F$. For such a linear vector field the open unit ball
is not invariant, hence there is an open subset of escaping points.
More precisely  the whole open unit ball is the closure of  
the union of two 
open subsets $V_e$ and $V_b$. The open subset $V_e$ contains only escaping
points, and the open subset $V_b$ contains points whose trajectories 
have compact closures in the unit ball. Actually these compact closures
are invariant tori of the linear system defined by the matrix $F$.
Note also that the linear vector field has zero divergence, 
so the $L-F$ flow preserves the Lebesgue measure.

These considerations apply already for 
$n=2$. If $F$ has purely imaginary eigenvalues,
but $F^* \neq -F$, then the open unit disk 
contains an open ellipse filled with elliptical integral curves,
The rest of the open unit disk is filled with escaping trajectories.

The open set of escaping trajectories $V_e$ gives rise by Theorem 6.1
to semi-integrability of the $L-F$ flow. 
If the matrix $L$ has eigenvalues with positive (and negative)
real parts then the open set of bounded trajectories $V_b$
leads by Proposition 7.1 to positive
Lyapunov exponents. By Pesin formula, \cite{K-H}, we obtain also positive 
metric entropy with respect to the invariant Lebesgue measure in $SN$.
We get 
\begin{theorem}
There are  volume-preserving quadratic $L-F$ systems on $SN$ which
are semi-integrable and have positive metric entropy.
\end{theorem}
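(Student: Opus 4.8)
The plan is to combine two independent mechanisms that the paper has already developed, applied to a single well-chosen linear vector field $F$. First I would fix the data: choose a matrix $L$ with $\operatorname{tr} L = 0$ so that a cocompact lattice $\Gamma$ exists and the compact phase space $SN$ is available, and arrange that $L$ has eigenvalues both off and on the imaginary axis in a way that gives $e^L$ eigenvalues outside the unit circle. Simultaneously choose $F$ with purely imaginary, pairwise distinct eigenvalues but $F^* \neq -F$ (the paper notes this is already possible for $n=2$). The key virtue of this choice of $F$ is twofold: its divergence is zero, so by Proposition 8.1 the $L-F$ flow preserves the Lebesgue measure $\nu$ on $SN$; and, as recalled in the preamble, the open unit ball decomposes as the closure of $V_e \cup V_b$, where $V_e$ consists of escaping points and $V_b$ of points with bounded (quasi-periodic, torus-filling) orbits of the linear system.

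Next I would extract semi-integrability from $V_e$. Since $V_e$ is a nonempty open set of escaping points, Theorem 6.1 applies directly and yields that the $L-F$ system on $SM$ (and hence its projection to $SN$) is semi-integrable: an open set of the phase space is filled with invariant tori carrying quasi-periodic motion. I would note that the passage from $SM$ to the compact $SN$ is free, as the paper states at the start of Section 7 that integrability and semi-integrability descend to $SN$ without extra hypotheses.

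Then I would extract positive metric entropy from $V_b$. For trajectories whose velocities stay in $V_b$, the $\xi$-component executes a bounded quasi-periodic motion on an invariant torus of $F$, while the $\eta$-coordinate, governed by $\frac{d\eta}{dt} = -\langle F\xi, \xi\rangle$, is forced to have a nonzero time average on a subset of positive measure. (Here I must check that $\langle F\xi,\xi\rangle$ has nonzero mean along typical bounded orbits; since $F^*\neq -F$ the quadratic form $\langle F\xi,\xi\rangle$ is not identically the antisymmetric-only expression, so it does not average to zero, and the ergodic average of $\eta$ is nonzero on a positive-measure set.) Invoking that $e^L$ has eigenvalues off the unit circle, Proposition 7.1 then gives both positive and negative Lyapunov exponents on this set. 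By the Pesin entropy formula (\cite{K-H}), positive Lyapunov exponents on a positive-$\nu$-measure invariant set yield positive metric entropy with respect to $\nu$. Real-analyticity of the flow is automatic since $F$ and $L$ are linear.

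The main obstacle I anticipate is the compatibility and nondegeneracy of the two mechanisms on the \emph{same} system: I must ensure that the set $V_b$ supporting hyperbolic behavior genuinely has positive Lebesgue measure in $SN$ (not merely nonempty interior) and that the induced $\eta$-average is bounded away from zero there, so that Proposition 7.1 delivers exponents of definite sign on a set of positive measure rather than on a measure-zero exceptional set. Concretely this requires confirming that the bounded-orbit region of the linear $F$-dynamics carries positive measure and that the time average of $\eta$ does not vanish identically on it; given the explicit elliptical/toral structure of the bounded orbits of a purely-imaginary non-normal $F$, this should hold for an open set of choices of $L$, but it is the one quantitative point that demands care rather than pure invocation of prior results.
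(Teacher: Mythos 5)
Your overall strategy coincides with the paper's: the same choice of $F$ (purely imaginary, pairwise distinct eigenvalues, $F^*\neq -F$), the decomposition of the open unit ball into the escaping region $V_e$ and the bounded region $V_b$, Theorem 6.1 applied to $V_e$ for semi-integrability, Proposition 8.1 for preservation of $\nu$, and Proposition 7.1 plus the Pesin formula over $V_b$ for positive entropy. However, the step in which you verify the hypothesis of Proposition 7.1 --- that $\eta$ has nonzero time average along trajectories over $V_b$ --- is argued incorrectly, and the auxiliary claim you propose to check is in fact false, so that part of your plan would collapse if carried out.

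You try to obtain a nonzero time average of $\eta$ from a nonzero time average of $\frac{d\eta}{dt}=-\langle F\xi,\xi\rangle$. This is the wrong quantity on two counts. First, a nonzero average of $\frac{d\eta}{dt}$ would force $\eta$ to grow linearly in time, which is impossible since $|\eta|\le 1$ on the unit sphere bundle; it would say nothing useful about the average of $\eta$ itself. Second, the claim is false: along any orbit on which $\eta$ stays bounded (hence along every Euler orbit), the time average of $\frac{d\eta}{dt}$ is necessarily zero, being the average of the derivative of a bounded function; the hypothesis $F^*\neq -F$ only guarantees that the symmetric part $S=\frac12(F+F^*)$ is nonzero, not that $\langle S\xi,\xi\rangle$ has nonzero orbit averages. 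The correct argument requires no averaging at all: if $\xi_0\in V_b$, the $F$-orbit closure of $\xi_0$ is a compact set $K$ contained in the open unit ball, so along the corresponding Euler trajectory in the upper half-sphere one has $\eta=\sqrt{1-\xi^2}\ge c:=\sqrt{1-\max_{\xi\in K}\xi^2}>0$ for all time; in particular $\eta$ never vanishes, never changes sign, and its time average is at least $c$, so Proposition 7.1 applies directly (with the sign reversed on the lower half-sphere). This observation also disposes of the ``obstacle'' in your last paragraph: $V_b$ is open and nonempty, hence automatically of positive Lebesgue measure, and no uniform lower bound on the $\eta$-average over all of $V_b$ is needed --- per-trajectory positivity of the exponents on a positive-measure invariant set already makes the Pesin integral positive.
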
    
Let us note that these properties will hold also for  small 
divergence free perturbations of $F$ as long as we guarantee 
that the perturbed vector field in $\mathfrak g$ still has a positive 
Lebesgue measure of quasi-periodic motions near the origin $\xi=0$. 
This can be achieved for instance when the linear vector field 
$F$  and the perturbation are hamiltionian and  satisfy
the non-degeneracy conditions of the KAM theory, \cite{A}, Appendix 8.

Further let us consider the modified linear vector field $F_1 = F -\epsilon I$.
We obtain for  $\epsilon >0$ the asymptotic stability of the 
origin $\xi =0$. At the same time for small $\epsilon$ the restriction of 
the vector field $F_1$ to the unit ball still has an open set of 
escaping trajectories. 
\begin{theorem}
There is an open set of quadratic $L-F$ systems on $SN$ which
are semi-integrable and the interior of the complement 
to the set of invariant tori is filled with orbits which are
asymptotic as $t \to +\infty \ (-\infty)$ to the attractor 
(repellor) carrying 
the suspension  of the toral automorphism $e^L\ (e^{-L})$.
\end{theorem}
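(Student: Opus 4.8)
The plan is to work with the modified field $F_1 = F - \epsilon I$ just introduced, where $F$ has distinct purely imaginary eigenvalues with $F^* \neq -F$ and where $L$ has eigenvalues of both signs of the real part, and to show that the asserted picture holds for all sufficiently small $\epsilon > 0$ and, being built entirely from open conditions, for an open set of quadratic $L-F$ systems. First I would record semi-integrability. The transversality condition (4) of Definition 5.1 is open, and $F$ already possesses a nonempty open set $V_e$ of escaping points; hence for small $\epsilon$ the field $F_1$ retains a nonempty open set of escaping points. By Theorem 6.1 the $L-F_1$ system on $SN$ is then semi-integrable, the escaping trajectories furnishing an open family of invariant $(n+1)$-dimensional tori.

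Next I would analyze the two poles $(\xi,\eta) = (0,\pm 1)$, which are fixed points of the Euler equation (3). Taking $\xi$ as local coordinates near the north pole, where $\eta \approx 1$, the Euler flow linearizes to $\dot\xi \approx F_1\xi$; since the eigenvalues of $F_1$ are $-\epsilon + i\omega_j$, all with negative real part, the north pole is asymptotically stable, and by $J$-reversibility the south pole is a repellor. The invariant sets $\mathcal A^\pm = \{\xi = 0,\ \eta = \pm 1\}$ project to copies of $N$ in $SN$ on which, as recalled in Section 7, the flow is the suspension of $e^L$, respectively $e^{-L}$. Because $tr\, L = 0$ and $L$ has eigenvalues off the imaginary axis, these suspensions are (partially) hyperbolic; applying Proposition 7.1 to orbits whose $\eta$-time-average is close to $+1$ (respectively $-1$) confirms that $\mathcal A^+$ is an attractor and $\mathcal A^-$ a repellor, with stable and unstable dimensions matching those of $e^L$.

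I would then identify the interior of the complement of the invariant tori. A point lying in the basin of the north pole and in $\{\eta > 0\}$ has Euler trajectory converging to the north pole as $t \to +\infty$, so $\eta(t) \to 1$; inserting this into the group-extension description of the flow (Proposition 3.1) shows that the full $L-F_1$ orbit in $SN$ is forward-asymptotic to the attractor $\mathcal A^+$. Running time backward, the Euler trajectory leaves every neighborhood of the north pole, crosses the equator $\{\eta = 0\}$ into the lower hemisphere and, by reversibility, is attracted to the south pole, so the orbit is backward-asymptotic to the repellor $\mathcal A^-$. These orbits fill the interior of the complement of the tori, which is the assertion; and since the nonemptiness of the escaping set, the hyperbolicity of the linearization at the poles, and the hyperbolicity of $e^L$ are all open conditions, the entire description persists for an open set of quadratic $L-F$ systems.

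The hard part will be to prove that the interior of the complement of the tori is \emph{exactly} the basin just described, with no open region occupied by orbits that shadow the chaotic subsystem for all time; equivalently, that the basin of the north pole together with the escaping set exhausts the hemisphere up to a nowhere-dense remainder. The delicate technical point is the behavior near the equator $\{\eta = 0\}$, where the time change $ds/dt = \eta$ used to pass between the field $F_1$ and the Euler flow degenerates: one must control the transit of orbits across the equator and verify the backward asymptotics to $\mathcal A^-$ carefully, since the global convergence of the linear basin need not be monotone when $F_1$ is far from normal.
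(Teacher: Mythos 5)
Your proposal follows essentially the same route as the paper: the paper states this theorem with no separate proof, deriving it from exactly the considerations you reproduce --- the modified field $F_1 = F - \epsilon I$, the persistence for small $\epsilon>0$ of an open set of escaping points (hence semi-integrability via Theorem 6.1), and the asymptotic stability of the origin $\xi = 0$, which makes the poles $(\xi,\eta)=(0,\pm 1)$ an attractor/repellor carrying the suspensions of $e^{\pm L}$. The ``hard part'' you flag at the end --- verifying that the interior of the complement of the tori coincides exactly with the polar basin, with no open set of orbits accumulating instead on the equator fixed points $\{\,|\xi|=1,\ \langle F_1\xi,\xi\rangle = 0\,\}$ --- is not addressed in the paper at all, so your treatment is, if anything, more explicit about where the remaining work lies.
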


We conjecture that the following is true.

{\bf Conjecture.}
{\it There are real-analytic divergence free vector fields in the closed unit ball
with an open and dense subset filled with escaping trajectories,
and with the complement of positive Lebesgue measure,
filled with trajectories defined for all times and 
which are contained in the interior of the unit ball.}

In the Arnold diffusion scenario a generic hamiltonian perturbation 
of an integrable hamiltonian system has a nowhere dense 
subset of invariant tori of positive Lebesgue measure. In the complement
almost all orbits move away unboundedly both in the future and in the past. 
However this scenario was not so far rigorously established in 
any real-analytic examples.

If this conjecture holds then the respective real-analytic $L-F$ system 
would be integrable and of positive metric entropy.

\section{Gaussian thermostats}

A Gaussian thermostat is defined  by a Riemannian metric on a manifold $M$
and a vector field $E$. The trajectories of the Gaussian thermostat
satisfy the following ordinary differential equations in the tangent
bundle $TM$,\cite{P-W}. 
\begin{equation}
\frac{d}{dt}x = v, 
\nabla_vv = E-\frac{\langle E,v\rangle}{\langle v,v\rangle}v,
\end{equation}
where $x = x(t)\in M$ is a parametrized curve in $M$.

By the force of these equations the ``kinetic energy'' $v^2$ is constant. 
Fixing the value of this constant $k = v^2$, and introducing the auxiliary vector field $F_k = \frac{1}{k}E$ we can rewrite the equations
(10) as
\begin{equation}
\frac{d}{dt}x = v, 
\nabla_vv = v^2F_k-\langle F_k,v\rangle v.
\end{equation}
It was observed in \cite{P-W} that the equations (11) describe 
geodesics of a special metric connection $\widetilde \nabla$ defined by the field $F_k$ 
\[
\widetilde\nabla_XY = \nabla_XY - \langle X,Y\rangle F_k + \langle Y,F_k\rangle X,
\]
where $X,Y$ are arbitrary smooth vector fields on $M$.
This connection is not symmetric, it is the unique metric 
connection with the torsion $T(X,Y) =\langle Y,F_k\rangle X 
-\langle X,F_k\rangle Y$, \cite{P-W}. 

Note that the equations of a Gaussian thermostat define significantly
different flows for different values of $k =v^2$, while the equations
(11) scale so that for different values of $v^2$ we get the same flow
up to a change of time by a constant factor. To get complete understanding
of the dynamics of (10) we need to consider the dynamics of the geodesic flow
of the metric connection (11) for the whole family of vector fields 
$F_k = \frac{1}{k} E, k >0$.

If the manifold $M$ is the locally homogeneous 
space of Section 1  and the vector field $E$ is left invariant, 
then we obtain a family of left-invariant para-metric connections.
To get a weakly $\mathcal K$-invariant connection we have to have 
$KE = E$, i.e., $E$ must be orthogonal to $\mathfrak g_0$. 
The equations of geodesics for these connections give us a family
of quadratic $L-F$ systems.  
Assuming that $E^2 =1$, we get that in this case the respective vector 
field in $\mathfrak g_0$ is equal to
\[
F(\xi) = L^*\xi -\frac{1}{k}\xi.
\]
We obtain the following corollary of Theorem 6.2 and
the considerations leading to Theorem 9.2.

\begin{corollary}
Let $r_{min}$ and $r_{max}$  be the smallest and the largest 
real parts of the eigenvalues of $L$.

If $r_{min} < r_{max}$  then for 
$k =v^2$ such that $r_{min} < \frac{1}{k} < r_{max}$ the Gaussian thermostat
(10) in $M$ is integrable.

For $k =v^2$ such that 
$\frac{1}{k} > r_{max}$ the trajectories of the Gaussian 
thermostat (10) in the compact space $N$ are asymptotic 
to a subsystem, which is 
the suspension of the automorphism $e^L$.

\end{corollary}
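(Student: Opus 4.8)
The plan is to identify the flow with a quadratic $L-F$ system and read off everything from the real parts of the eigenvalues of the matrix $F = L^* - \frac{1}{k} I$, invoking Theorem 6.2 for the first assertion and the attractor mechanism behind Theorem 9.2 for the second. First I would record the reduction already prepared in the text: on the energy level $v^2 = k$ the Gaussian thermostat (10) coincides with the geodesic flow of the metric connection $\widetilde\nabla$ determined by $F_k = \frac1k E$, and under the identification of $\{v^2 = k\}$ with $SM$ this is exactly the quadratic $L-F$ system whose defining vector field is $F(\xi) = L^*\xi - \frac1k\xi = (L^* - \frac1k I)\xi$. Since $L$ is real, the eigenvalues of $L^*$ are the complex conjugates of those of $L$ and hence have the same real parts; consequently the spectrum of $F$ is obtained by subtracting $\frac1k$, so the real parts occurring for $F$ are precisely $\{\mathrm{Re}\,\lambda - \frac1k\}$ as $\lambda$ runs over the eigenvalues of $L$. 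In particular the largest and smallest real parts for $F$ are $r_{max} - \frac1k$ and $r_{min} - \frac1k$.

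For the first assertion, the hypothesis $r_{min} < \frac1k < r_{max}$ is equivalent to $r_{min} - \frac1k < 0 < r_{max} - \frac1k$, i.e.\ $F$ has eigenvalues with both positive and negative real parts. Theorem 6.2 then applies verbatim and yields integrability of the quadratic $L-F$ system on $TM$, with the invariant $(n+1)$-tori cut out by the real-analytic first integrals $\Phi = e^{-uF}\xi$. Restricting to the unit sphere bundle and transporting back through the identification $\{v^2 = k\}\cong SM$ (a constant time rescaling, which preserves the tori and their quasi-periodic parametrization) gives integrability of the thermostat (10) in $M$ at that value of $k$.

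For the second assertion, $\frac1k > r_{max}$ forces $r_{max} - \frac1k < 0$, so every eigenvalue of $F$ has negative real part and the origin $\xi = 0$ is an asymptotically stable fixed point of $\dot\xi = F\xi$. Passing to the Euler flow on $\mathbb S^n$ via the time change $\frac{ds}{dt} = \eta$, this makes the north pole $(\xi,\eta) = (0,1)$ an attracting fixed point (and the south pole repelling by $J$-reversibility), hence the submanifold $\mathcal A = \{\xi = 0,\ \eta = 1\}\subset SN$ an attracting invariant set. On $\mathcal A$ one has $\eta\equiv 1$, so $\frac{du}{dt} = 1$ and, exactly as in Section 7, the restricted flow is the suspension of the toral automorphism $e^L$, whose hyperbolic stable/unstable structure is certified by Proposition 7.1. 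Every orbit in the basin of $\mathcal A$ is therefore forward asymptotic to this suspension subsystem, which is the content of the claim. I would establish that the basin is large by reusing the considerations behind Theorem 9.2: a quadratic Lyapunov function for the sink $\dot\xi = F\xi$ produces an open neighborhood of the north pole lying in the forward basin of $\mathcal A$, while the orbits that fail to converge are confined to a lower-dimensional invariant set built from the equatorial fixed points $\{|\xi| = 1,\ \langle F\xi,\xi\rangle = 0\}$ and the unstable manifold of the south pole, together with their invariant manifolds.

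The step I expect to be the main obstacle is precisely this last one: controlling the global Euler flow on $\mathbb S^n$ well enough to conclude that the basin of $\mathcal A$ is genuinely large (open dense, or of full Lebesgue measure) rather than merely a neighborhood of the attractor. The difficulty is that for a non-normal $F$ the trajectories $e^{sF}\xi_0$ can grow transiently and leave the unit ball before spiraling in, so the naive picture ``everything flows to the north pole'' must be supplemented by an analysis of the equatorial fixed set and of the orbits that exit across the equator. Since all of this mirrors --- and is in fact simpler than --- the situation already treated in Section 9, I would quote those considerations rather than reprove them.
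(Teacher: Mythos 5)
Your reduction and the first assertion are exactly the paper's argument: the paper offers no proof beyond citing Theorem 6.2 and ``the considerations leading to Theorem 9.2,'' and your identification of the thermostat at kinetic energy $k$ with the quadratic $L$-$F$ system for $F = L^* - \tfrac1k I$, followed by the observation that $r_{min} < \tfrac1k < r_{max}$ places eigenvalues of $F$ in both half-planes so that Theorem 6.2 applies, is precisely that argument. Your identification of $\mathcal A = \{\xi = 0,\ \eta = 1\}$ as an invariant submanifold carrying the suspension of $e^L$, attracting because the origin is a sink of the linear field $F$, is also the mechanism the paper intends for the second assertion.

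The genuine gap is your closing claim that the orbits failing to converge to $\mathcal A$ are confined to a lower-dimensional invariant set, together with the expectation that quoting Section 9 would confirm this; Section 9 in fact shows the opposite. Whenever $L$ is not normal there is a nonempty window $r_{max} < \tfrac1k < \lambda_{max}\bigl(\tfrac12(L+L^*)\bigr)$ (e.g.\ $L = \bigl(\begin{smallmatrix} 1 & c \\ 0 & -1 \end{smallmatrix}\bigr)$ with $c$ large) in which every eigenvalue of $F$ has negative real part and yet the quadratic form $\langle F\xi,\xi\rangle$ is indefinite. For such $F$ escaping trajectories exist and form an open set: take a unit vector with $\langle F\xi_1,\xi_1\rangle > 0$; since $|e^{sF}\xi_1| \to \infty$ as $s \to -\infty$ and $|e^{sF}\xi_1| \to 0$ as $s \to +\infty$ while the norm is increasing at $s=0$, the norm dips below $1$ at some $s_1<0$ and exceeds $1$ afterwards, and rescaling $\xi_1$ by a regular value of the norm yields a segment entering and exiting the unit ball transversally, i.e.\ an escaping trajectory in the sense of Definition 5.1. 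By Theorems 5.2 and 6.1 these generate an \emph{open} set of $SN$ filled with invariant $(n+1)$-tori carrying quasi-periodic motions, whose orbits stay at positive distance from $\mathcal A$ for all time. So in this window the basin of $\mathcal A$ has neither full measure nor dense complement-free structure: the true picture is the coexistence phenomenon of Theorem 9.2, which is stated carefully so as to claim asymptoticity only on the interior of the complement of the tori, and the Corollary must be read in that spirit (its literal ``the trajectories'' wording overstates the case in this window just as your proposal does). The regime where your strong claim is correct -- and where no Lyapunov-function or basin analysis is needed at all -- is $\tfrac1k > \lambda_{max}\bigl(\tfrac12(L+L^*)\bigr)$: then $\langle F\xi,\xi\rangle < 0$ for $\xi \neq 0$, so $|\xi|$ strictly decreases along the Euler flow in the upper hemisphere, $\dot\eta > 0$ on the equator, no orbit can cross the equator twice, and every orbit off the repellor $\{\xi = 0,\ \eta = -1\}$ converges to $\mathcal A$.
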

In particular if $r_{min} < 0< r_{max}$ then for $0 < k < \frac{1}{r_{max}}$
the Gaussian thermostat (10), for the field $E$ orthogonal to $\mathfrak g_0$,
has a global attractor carrying the suspension of $e^L$ 
(which can be an Anosov flow).
However for $k > \frac{1}{r_{max}}$ it is integrable.
This is a somewhat paradoxical behavior: strong mixing properties at
low kinetic energy, and quasi-periodic motions for high kinetic energy.
Admittedly the strong mixing properties occur at the asymptotic
submanifold of half the dimension of the phase space. And
this subsystem is present for all values of the kinetic energy,
but it is not an attractor or a repellor in the integrable case.

\section{A family of Riemannian metrics with integrable geodesic flows}

The Gaussian thermostats can be associated with Weyl connections,
\cite{W1},\cite{W2}.  The left invariant vector field $E$ of Section 10 
is a gradient of a function on the group $G$, which factors to 
$M =\Gamma_0\backslash G$. Indeed we have 
$E = \frac{\partial}{\partial u} = \nabla u$. 
The Weyl connection defined by gradient vector fields are 
Levi-Civita connections of modified metrics, namely for the 
vector field $F_k = \frac{1}{k}E$ we need to 
multiply the initial left invariant metric by the function 
$e^{\frac{2u}{k}}$. 

Let for simplicity $L$ be diagonal with eigenvalues 
$\lambda_1,\lambda_2,\dots,\lambda_n$. The left invariant 
metric on $G$ is given by 
\[
ds^2 = e^{-2u\lambda_1}dw_1^2 +  e^{-2u\lambda_2}dw_2^2 + \dots
 +e^{-2u\lambda_n}dw_n^2 + du^2.
\]
Introducing the new variables  
\[
x_0 =e^{\frac{u}{k}}, x_i = \frac{1}{k} w_i, i = 1, \dots, n,
\]
we obtain 
\[
e^{\frac{2u}{k}}ds^2 = k^2\left(x_0^{2\tau_1}dx_1^2 +  x_0^{2\tau_2}dx_2^2 + \dots
 +x_0^{2\tau_n}dx_n^2 + dx_0^2\right), 
\]
where $\tau_i = 1 - k\lambda_i, i = 1, \dots, n$.
It was calculated in  \cite{T-W} that the Weyl sectional curvatures 
are non-positive in this case if and only if $\tau_i \geq 1$ or 
$\tau_i =0$ for $i = 1, \dots, n$.
The sign of  sectional curvatures of the last metric must be the same as  
for the Weyl connection (\cite{P-W}).

Considering the geodesic flow of the Riemannian  metric 
on $M = \mathbb T^n \times \mathbb R_+ = \{(x;x_0)| x \ mod \ \Gamma_0, x_0>0\}$, 
where $\Gamma_0$ is a lattice in $\mathbb R^n$, we conclude that 
it is integrable if and only if  there are exponents $\tau_i$ 
of opposite signs. It follows from the considerations of Section 10,
but this phenomenon can be greatly generalized. Note that the metric 
is not complete since the variable $x_0$ is assumed positive.
Nevertheless in the integrable case geodesics from an open and dense 
subset can be extended indefinitely since they do not leave a compact 
subset of $M$. 
If the conditions of integrability are not satisfied then 
almost all geodesics leave every compact subset of $M$.

For an open interval $(a,b)\subset \mathbb R$, finite or infinite, 
let us consider a Riemannian metric on 

$M = \mathbb T^n \times (a,b) = \{(x;x_0)| x \ mod \ \Gamma_0, a < x_0 < b\}$, 
\[
ds^2 = \alpha^2_1dx_1^2 +  \alpha^2_2dx_2^2 + \dots
 +\alpha^2_ndx_n^2 + dx_0^2, 
\]
where all the functions $\alpha_i$ are positive functions of the variable
$x_0$ alone, defined on the interval $(a,b)$.

\begin{proposition}
If  $\lim_{x_0\to a}\alpha_i(x_0) =0$ and  $\lim_{x_0\to b}\alpha_l(x_0) =0$ 
for some $i$ and $l$ then the geodesic flow is integrable.
\end{proposition}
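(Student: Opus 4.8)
The plan is to exhibit the geodesic flow as a Liouville-integrable Hamiltonian system on $TM$ and then to use the vanishing hypotheses on $\alpha_i$ and $\alpha_l$ to guarantee that the invariant level sets are \emph{compact}, so that the Arnold--Liouville theorem produces invariant $(n+1)$-tori on an open dense subset, matching the $(n+1)$-dimensional tori of Corollary~6.3.

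First I would record the symmetries. Since every $\alpha_j$ depends on $x_0$ alone, each translation $x_j \mapsto x_j + \mathrm{const}$ is an isometry, so the fields $\partial/\partial x_j$ are Killing and the associated momenta
\[
p_j = \alpha_j(x_0)^2\,\dot x_j, \qquad j = 1,\dots,n,
\]
are first integrals. Arising from the commuting $\mathbb T^n$-action they are pairwise in involution, and each Poisson-commutes with the energy
\[
H = \tfrac12\Big(\sum_{j=1}^n \alpha_j(x_0)^2\,\dot x_j^2 + \dot x_0^2\Big)
   = \tfrac12\Big(\sum_{j=1}^n \frac{p_j^2}{\alpha_j(x_0)^2} + p_0^2\Big),
\]
where $p_0 = \dot x_0$. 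On the $(2n+2)$-dimensional phase space $TM$ the collection $p_1,\dots,p_n,H$ supplies the required $n+1$ integrals in involution; they are functionally independent on an open dense set, and fixing $H$ restricts the picture to the unit tangent bundle $SM$.

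The crux is compactness of the common level sets. Fixing $p_j = c_j$ and $H = E$, the $x_0$-motion decouples into a one-dimensional motion
\[
\dot x_0^2 = 2E - W(x_0), \qquad W(x_0) = \sum_{j=1}^n \frac{c_j^2}{\alpha_j(x_0)^2},
\]
in the effective potential $W$. Here the hypotheses enter decisively: because $\alpha_i(x_0)\to 0$ as $x_0\to a$, the $i$-th term forces $W(x_0)\to +\infty$ at the left end whenever $c_i\neq 0$, and likewise $\alpha_l(x_0)\to 0$ forces $W(x_0)\to +\infty$ at the right end whenever $c_l\neq 0$. Thus the accessible set $\{x_0 : W(x_0)\le 2E\}$ is a closed subset of $(a,b)$ bounded away from both endpoints, that is, a compact subinterval $[x_0^-,x_0^+]\subset(a,b)$; the coordinate $x_0$ oscillates between the turning points $x_0^\pm$ and is therefore periodic. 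The incompleteness of the metric at $a$ and $b$ is screened off for these geodesics by the centrifugal barrier created by the vanishing of $\alpha_i$ and $\alpha_l$.

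Finally I would assemble the tori. On the open dense subset of $TM$ where $c_i\neq 0$, $c_l\neq 0$, the turning points are simple and the $n+1$ integrals are independent; there each connected component of a common level set $\{p_1=c_1,\dots,p_n=c_n,\,H=E\}$ is compact, since $x_0$ ranges over $[x_0^-,x_0^+]$ while the $x_j$ sweep out the torus. By the Arnold--Liouville theorem such a component is an embedded $(n+1)$-torus carrying a quasi-periodic flow, so this open dense family of invariant tori yields integrability in the sense of Definition~6.1. The main obstacle is not the involution or independence of the integrals, which is routine, but precisely the confinement step: one must verify that $\{W\le 2E\}$ is nonempty and genuinely compact, and check that the degenerate momentum directions $c_i=0$ or $c_l=0$ — where a barrier may fail to form and a geodesic can escape toward $a$ or $b$ — lie in a set of lower dimension and hence do not spoil density.
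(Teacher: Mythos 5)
Your proposal is correct and follows essentially the same route as the paper: the Hamiltonian formulation with the $n$ cyclic momenta $p_j$ plus $H$ as $n+1$ integrals in involution, compactness of level sets forced by $\alpha_i^{-2}p_i^2+\alpha_l^{-2}p_l^2\to+\infty$ at the ends of $(a,b)$ when $p_i,p_l\neq 0$, and the Arnold--Liouville theorem on the resulting open dense set. Your effective-potential phrasing of the confinement step is just a slightly more detailed rendering of the paper's one-line energy estimate.
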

\begin{proof}
Let us pass to the hamiltonian formulation. We get 
the momenta 
\[
p_0 = \dot x_0,\ p_j = \alpha^{2}_j \dot x_j, j = 1, \dots, n, 
\]
and the hamiltonian
\[
H = \frac{1}{2}\left(p_0^2 + \sum_{j=1}^{n}\alpha^{-2}_jp^2_j\right).
\]  
We have the obvious $n$ first integrals $p_1, \dots, p_n$. Together 
with $H$ we get $n+1$ first integrals in involution. Under the 
assumption that $p_i \neq 0$ and $p_l\neq 0$ the common level set of the
first integrals must be compact in the variable $x_0$, hence altogether 
compact. Indeed, we have
\[
H \geq \alpha^{-2}_lp^2_i +\alpha^{-2}_lp^2_l\to +\infty 
\ \ \text{as} \ \  x_0 \to a,b.
\]
By the Arnold-Liouville Theorem all regular compact level sets are tori
carrying the quasi-periodic motions.
\end{proof}
This last Proposition takes us away from homogeneous systems,
now we need only the $\mathbb T^n$ symmetry.

\end{document}